\documentclass[11pt]{amsart}
\usepackage{stmaryrd}
\usepackage{amssymb}
\usepackage{csvsimple}
\usepackage{siunitx}
\usepackage{hyperref}
\usepackage{url}
\usepackage{graphicx,color}
\usepackage{lineno}           
\usepackage[style=trad-abbrv,url=false,doi=true,
  eprint=true,isbn=false,backend=biber]{biblatex}

\addbibresource{bib_convex_dg.bib}
\graphicspath{{pics/},{figs/}}
\def\R{\mathbb{R}}

\def\T{\mathbb{T}}
\def\Rinf{\R\cup \{+\infty\}}

\def\cC{\mathcal{C}}

\def\cI{\mathcal{I}}
\def\cJ{\mathcal{J}}

\def\cL{\mathcal{L}}

\def\cO{\mathcal{O}}

\def\cS{\mathcal{S}}
\def\cT{\mathcal{T}}

\def\a{\alpha}
\def\b{\beta}
\def\g{\gamma}
\def\d{\delta}

\def\l{\lambda}
\def\s{\sigma}
\def\p{\partial}

\def\veps{\varepsilon}
\def\vrho{\varrho}
\def\vphi{\varphi}
\def\O{\Omega}
\def\G{\Gamma}
\def\GD{{\Gamma_D}}
\def\GN{{\Gamma_N}}


\def\wto{\rightharpoonup}


\def\ou{\overline{u}}

\def\tu{\widetilde{u}}


\newcommand{\dv}[1]{\,{\mathrm d}#1}

\newcommand{\trnorm}[1]{|\negthinspace|\negthinspace|#1|\negthinspace|\negthinspace|}
\newcommand{\wcheck}[1]{#1\hspace{-.8ex}\mbox{\huge {\lower.45ex \hbox{$\textstyle \check{}$}}} \hspace{.5ex}}

\newcommand{\jump}[1]{\llbracket#1\rrbracket}   
\DeclareMathOperator{\id}{id}

\DeclareMathOperator{\diver}{div}
\def\nablah{\nabla_{\!h}}

\DeclareMathOperator{\diam}{diam}



\setlength{\marginparwidth}{0.8in}
\let\oldmarginpar\marginpar
\renewcommand\marginpar[1]{
  \oldmarginpar[\raggedleft\footnotesize #1]
  {\raggedright\footnotesize #1}}

\newtheorem{definition}{Definition}

\newtheorem{proposition}[definition]{Proposition}
\newtheorem{theorem}[definition]{Theorem}
\newtheorem{corollary}[definition]{Corollary}
\newtheorem{remark}[definition]{Remark}
\newtheorem{remarks}[definition]{Remarks}
\newtheorem{example}[definition]{Example}

\numberwithin{definition}{section}
\definecolor{modmag}{RGB}{179,0,229}


\def\RT{{\mathcal{R}T}}

\def\ou{\overline{u}}

\def\tchi{\widetilde{\chi}}

\def\nablah{\nabla_{\!h}}

\def\tz{{\widetilde{z}}}

\def\DD{\mathrm{D}}

\begin{document}
\title[DG methods for nonsmooth problems]{Error estimates for
a class of discontinuous Galerkin methods for nonsmooth problems 
via convex duality relations}
\author{S\"oren Bartels}
\address{Abteilung f\"ur Angewandte Mathematik,  
Albert-Ludwigs-Universit\"at Freiburg, Hermann-Herder-Str.~10, 
79104 Freiburg i.~Br., Germany}
\email{bartels@mathematik.uni-freiburg.de}
\date{\today}
\renewcommand{\subjclassname}{
\textup{2010} Mathematics Subject Classification}
\subjclass[2010]{65N12 65N15 65N30 49M25}
\begin{abstract}
We devise and analyze a class of interior penalty discontinuous
Galerkin methods for nonlinear and nonsmooth variational problems.
Discrete duality relations are derived that lead to optimal
error estimates in the case of total-variation regularized 
minimization or obstacle problems. The analysis provides 
explicit estimates that precisely determine the role of stabilization 
parameters. Numerical experiments suppport the optimality
of the estimates. 
\end{abstract}
\keywords{Nonsmooth problems, discontinuous Galerkin method,
error estimates, total variation, variational inequalities}

\maketitle

\section{Introduction}
\subsubsection*{Total-variation minimization}
As a particular example of a nonsmooth convex variational problem we  
consider the total variation regularized optimization problem that
determines a function $u\in BV(\O)\cap L^2(\O)$ via a minimization of
\[
I(u) = |Du|(\O) + \frac{\a}{2}\|u-g\|^2,
\]
where $|Du|(\O)$ is the total variation of $u\in L^2(\O)$, which coincides
with $\|\nabla u\|_{L^1(\O)}$ if $u\in W^{1,1}(\O)$, while
$\a>0$ and $g\in L^2(\O)$ are given data, cf.~\cite{AmFuPa00-book,AtBuMi06-book,CCCNP10} 
for analytical features and numerical methods.  
Since discontinuous solutions
are expected and since continuous methods are known to provide suboptimal
results~\cite{BaNoSa15,Bart20-pre}, it is attractive to discretize the minimization problem
by a discontinuous finite element method, e.g., via determining an
elementwise affine, possibly discontinuous function $u_h \in \cS^{1,dg}(\cT_h)$
on a triangulation $\cT_h$ as a minimizer of the functional
\[
I_h(u_h) = \int_\O |\nablah u_h| \dv{x} 
+ \frac{1}{r} \int_{\cS_h\setminus \p\O}  h_\cS^{-\g r} |\jump{u_h}_h|^r \dv{s}
+ \frac{\a}{2} \| \Pi_h(u_h - g)\|^2. 
\]
Here, $\nablah$ denotes the elementwise application of the gradient,
$\cS_h$ stands for the union of element sides in $\cT_h$, the operator
$\Pi_h$ is the projection onto piecewise
constant functions or vector fields on $\cT_h$, the function
$h_\cS:\cS_h \to \R_{>0}$ is a mesh-size function, and $\jump{\cdot}$ and 
$\jump{\cdot}_h$ denote the jump and the mean of a jump
of a piecewise polynomial function. Since $u_h$ is piecewise affine
we have that $\Pi_h u_h|_T = u_h(x_T)$ for every element $T\in \cT_h$
with midpoint (barycenter) $x_T$ and 
\[
\jump{u_h}_h|_S = \jump{u_h}(x_S) =  \lim_{\veps \to 0} u_h(x+\veps n_S) - u_h(x_S - \veps n_S)
\]
for every side $S\in\cS_h$ with midpoint (barycenter) $x_S$ and unit normal 
$n_S$. The second term in the discrete energy functional $I_h$ thus 
penalizes averages of jumps across interelement sides. The use of the mean
has the alternative interpretation of using quadrature which makes the scheme
practical. The choice of the parameters
$r$ and $\g$ is crucial for an accurate approximation of the exact solution $u$.
We remark that our approach is motivated and inspired by recent results 
in~\cite{ChaPoc19-pre,Bart20-pre} 
on discretizations of nonsmooth problems using Crouzeix--Raviart elements. 

To quantify the accuracy we define a discrete dual problem. We show
that a naturally associated maximization problem is defined for a discrete vector field
$z_h\in \RT^0_{\!N}(\cT_h)$ by the functional
\[\begin{split}
D_h(z_h) =& - I_{K_1(0)} (\Pi_{0,h} z_h)  
- \frac{1}{r'} \int_{\cS_h\setminus \GN} h_\cS^{\g r'} |\{z_h\cdot n_\cS\}|^{r'} \dv{s} \\
&- \frac{1}{2\a} \|\diver z_h + \a g_h\|^2 + \frac{\a}{2} \|g_h\|^2,
\end{split}\]
where $r' = r/(r-1)$. 
The Raviart--Thomas finite element space $\RT^0_{\!N}(\cT_h)$ consists of 
certain elementwise affine vector fields whose weak divergence is a function
and whose normal component vanishes on $\GN = \p\O$. In
particular, the normal components $z_h\cdot n_\cS$ are continuous and
constant on element sides, so that their averages $\{z_h\cdot n_\cS\}$ 
coincide with the values $z_h\cdot n_\cS$ on every side. It turns out that the 
penalty terms in the discrete primal problem $I_h$  
are related to stabilizing terms on element sides in the discrete dual 
functional~$D_h$.
The indicator functional $I_{K_1(0)}$ enforces 
the length of the vector field $z_h$ to be bounded by~1 at element 
midpoints. Thus, in the discrete duality relation, jumps of functions in the
primal problem lead to averages in the dual formulation. On 
the continuous level the dual formulation consists in maximizing
the functional 
\[
D(z) = -I_{K_1(0)}(z) - \frac{1}{2\a} \|\diver z + \a g\|^2 + \frac{\a}{2} \|g\|^2
\]
in the set of vector fields $z\in W^2_N(\diver;\O)$ whose distributional divergence
belongs to $L^2(\O)$ and whose normal component vanishes on $\GN= \p\O$. Strong
duality applies, i.e., we have $I(u)=D(z)$ for solutions~$u$ and~$z$, which is
a well-posedness property of the variational problem.

An error estimate follows from coercivity properties of $I_h$ and
the crucial discrete duality relation $I_h(u_h) \ge D_h(z_h)$. More precisely, 
with appropriately 
defined quasi-interpolants $\cI_h u$, that is continuous at midpoints of
element sides, and $\cJ_h z$, for a sufficiently regular solution~$z$
of the continuous dual problem, we have
\[
\frac{\a}{2}\|\Pi_h (u_h - \cI_h u)\|^2 \le I_h(\cI_h u) - I_h(u_h) 
\le I_h(\cI_h u) - D_h(\cJ_h z).
\]
The quasi-interpolants
are defined in such a way that we have
\[
\nablah \cI_h u = \Pi_h \nabla u, \quad 
\diver \cJ_h z = \Pi_h \diver z.
\]
These relations allow us apply Jensen's inequality, which in the
present context has the interpretation of a total-variation diminishing 
interpolant, and thereby leads to the discrete error estimate
\[
\frac{\a}{2}\|\Pi_h (u_h - \cI_h u)\| \le c h^{1/2},
\]
provided that $z\in W^{1,\infty}(\O;\R^d)$, $u\in L^\infty(\O)$,
and $\g r' \ge 2$ or $\g \ge 0$ if $r=1$.  
The estimate implies the error bound
\[
\|u- \Pi_h u_h\| \le c h^{1/2},
\]
where $\Pi_h u_h$ can be replaced by $u_h$ provided that the
sequence $(u_h)_{h>0}$ is uniformly bounded in $L^\infty(\O)$. 
The convergence rate $\cO(h^{1/2})$ coincides with the rate for  
Crouzeix--Raviart finite element methods, cf.~\cite{ChaPoc19-pre,Bart20-pre},
and is quasi-optimal for the approximation of a generic function in 
$BV(\O)\cap L^\infty(\O)$. The optimal rate can in general not
be obtained with continuous finite element methods \cite{Bart12,BaNoSa15}.
Note that since $\g r= 0$ is allowed the approach is not a pure penalty 
method. On the other hand, it does not suffer from locking effects for
large values of $\g r$ owing to the use of quadrature in the 
jump contributions. We refer the reader to~\cite{CaiCha20-pre} for
discretizations using finite difference methods. 

\subsubsection*{General error analysis}
The analysis summarized for the numerical approximation of the total
variation-regularized problem by discontinuous methods can be generalized
in several ways. For this, we consider a convex mimimization
problem defined via (a suitable extension of) the functional
\[
I(u) = \int_\O \phi(\nabla u) + \psi(x,u) \dv{x}
\]
on a Sobolev space $W^{1,p}_D(\O)$ of functions with vanishing traces on
$\GD\subset \p\O$. The dual formulation is given by a maximization of the
functional 
\[
D(z) = -\int_\O \phi^*(z) + \psi^*(x,\diver z) \dv{x},
\]
in a space $W^q_N(\diver;\O)$ of vector fields in $L^q(\O;\R^d)$ 
whose normal components vanish on $\GN = \p\O\setminus \GD$ and whose 
distributional divergence belongs to $L^q(\O)$, where $q=p'$ is the
conjugate exponent of~$p$.

A class of discontinuous Galerkin 
discretizations of the primal problem is given by the functionals 
\[\begin{split}
I_h(u_h) =  \int_\O & \phi(\nablah u_h) + \psi_h(x,\Pi_h u_h) \dv{x}   \\
& + \frac1r \int_{\cS_h\setminus \GN} \a_\cS^{-r} |\jump{u_h}_h|^r \dv{s}  
+ \frac1s \int_{\cS_h\setminus \GD} \b_\cS^s |\{u_h\}_h|^s \dv{s}
\end{split}\]
with suitable exponents $r,s\ge 1$ and weights 
$\a_\cS,\b_\cS:\cS_h \to \R_{\ge 0}$ on the element sides. The 
quantities $\{u_h\}$ are on every side the average of traces
of $u_h$ from adjacent elements, its mean $\{u_h\}_h$ coincides for
piecewise affine functions with the evaluation at the midpoint of $S$, i.e., 
\[
\{u_h\}_h = \{u_h\}(x_S) 
= \lim_{\veps \to 0} \frac12 \big(u_h(x+\veps n_S) + u_h(x_S - \veps n_S)\big).
\]
We show that a discrete duality argument leads to the discrete dual
functional 
\[\begin{split}
D_h(z_h) =& - \int_\O \phi^*(\Pi_{0,h} z_h) + \psi_h^*(x,\diver_h z_h) \dv{x} \\
& - \frac{1}{r'} \int_{\cS_h \setminus \GN} \a_\cS^{r'} |\{z_h\cdot n\}|^{r'} \dv{s}  
- \frac{1}{s'} \int_{\cS_h \setminus \GD} \b_\cS^{-s'} |\jump{z_h\cdot n}|^{s'} \dv{s}.
\end{split}\]
This functional is defined on a broken Raviart--Thomas finite element space
$\RT^{0,dg}(\cT_h)$. Here, jumps of the normal component of $z_h$ across
element sides are penalized which corresponds to the presence of
the averages $\{u_h\}$ on element sides in the discrete primal problem. 
The duality of jumps and averages on element sides is a result of an elementwise 
integration by parts and the elementary formula for inner sides 
\[
\jump{u_h z_h \cdot n_S} 
= \jump{u_h}\{z_h \cdot n_S\} + \{u_h\} \jump{z_h\cdot n_S},
\]
that relates jumps of products to products of jumps and averages. Together
with Fenchel's inequality it leads to the important discrete duality 
relation
\[
I_h(u_h) \ge D_h(z_h)
\]
for solutions $u_h$ and $z_h$ of the discrete primal and dual problems,
respectively.
As above, this inequality is important for an error analysis. In particular,
it provides full control on nonconformity errors which otherwise require
the use of a Strang lemma or suitable reconstruction operatos, 
cf., e.g.,~\cite{Ciar78-book,BreSco08-book}. Here, 
these error contributions are entirely controlled via structure-preserving
features of the discretizations. 

\subsubsection*{Nonlinear Dirichet and obstacle problems}
For a class of nonlinear Dirichlet problems with linear low order terms 
given by 
\[
\psi(x,s) = -f(x) s
\]
we obtain with an appropriate coercivity functional $\s$ the general and
constant-free discrete error estimate 
\[\begin{split}
\int_\O \s (\nablah u_h,\nablah & \cI_h u) \dv{x} 
 \le \int_\O \big(D\phi^*(z)-D\phi^*(\Pi_h \cJ_h z)\big) \cdot (z-\Pi_h \cJ_h z)\dv{x} \\
&  + \frac{1}{r'} \|\a_\cS \{\cJ_h z\cdot n_\cS\}\|_{L^{r'}(\cS_h \setminus \GN)}^{r'}
+ \frac{1}{s'} \|\b_\cS \{\cI_h u\}_h\|_{L^{s'}(\cS_h \setminus \GD)}^{s'},
\end{split}\]
with quasi-interpolants $\cI_h u$ and $\cJ_h z$ of sufficiently regular
primal and dual solutions $u$ and $z$, respectively. The concepts also
apply to obstacle problems, for which the low order term is given
by
\[
\psi(x,s) = -f(x) s + I_{\R_{\ge 0}}(s),
\]
with the indicator function $I_{\R_{\ge 0}}$ that enforces the 
solution $u$ of the primal problem to be nonnegative. In the case of
a quadratic functional $\phi(v) = |v|^2/2$ and with $r=s=2$ we obtain for 
regular solutions $u\in W^{1,2}_D(\O)\cap W^{2,2}(\O)$ the 
constant-free discrete error estimate
\[\begin{split}
\frac12 \|\nablah (u_h-\cI_h u)&\|^2  \le \frac12 \|z-\Pi_h \cJ_h z\|^2 \\
& \quad + \|f+\Delta u\| \big(\|u-\cI_h u\| + h \|\nablah (u-\cI_h u)\|\big)  \\
& + \frac{1}{2} \|\a_\cS \{\cJ_h z\cdot n_\cS\}\|_{L^2(\cS_h \setminus \GN)}^2
+ \frac{1}{2} \|\b_\cS \{\cI_h u\}_h\|_{L^2(\cS_h \setminus \GD)}^2.
\end{split}\]
The right-hand side is of quadratic order if 
\[
\big\|\a_\cS h_\cS^{-3/2}\|_{L^\infty(\cS_h)}
+ \big\|\b_\cS  h_\cS^{-3/2}\|_{L^\infty(\cS_h)}  \le c_\cT,
\]
i.e., if $\a_\cS =\b_\cS = \cO(h_\cS^{3/2})$.
Note that $\a_\cS>0$ is needed for well-posedness
of the discrete problem while if $\b_\cS=0$ then the contribution
to $D_h$ involving the jumps $\jump{z_h\cdot n_\cS}$ becomes an indicator
functional and the space $\RT^{0,dg}(\cT_h)$ has to be replaced by 
$\RT^0_N(\cT_h)$. 

\subsubsection*{Relations to other methods}
In contrast to established discontinuous Galerkin methods for
elliptic problems as in~\cite{Arno82,ABCM01,CCPS00,diPErn12-book},
which are typically derived 
from strong formulations of partial differential equations, we obtain
here more restrictive conditions on the penalty parameters in the
case of differentiable elliptic equations to obtain quasi-optimal error
estimates. In these cases the simple
interior penalty approach realized by the discrete minimization 
problems~$I_h$ is inconsistent with the weak formulations of the 
corresponding partial differential equations. While the variational
approach for penalty based discontinuous Galerkin methods for variational
problems considered in~\cite{BufOrt09} shows that convergence 
is guaranteed under mild conditions, our numerical experiments 
confirm that they are not sufficient to obtain optimal convergence
rates. For the nondifferentiable
total-variation problem our discretizations do not require 
penalizations and our approach is conistent with a natural discretization
of the total-variation norm. Generally, the error analysis used
here only uses optimality conditions in terms of first order
system and subdifferentials. 
Another advantage of our error analysis
based on duality arguments is that it provides explicit estimates
that do not require absorbing terms and hence precisely determine the
role of the parameters involved in the discontinuous Galerkin
discretization. Additionally, it leads to a realistic and optimal
regularity condition in terms of the dual solution. Throughout this
article we use simplicial partitions which is important for the 
error analysis. Our duality arguments transfer verbatimly to 
general classes of polyhedral partitions.

\subsubsection*{Outline}
The outline of this article is as follows. In Section~\ref{sec:fe_spaces}
we introduce notation and define appropriate finite element spaces.
A discrete duality theory is provided in Section~\ref{sec:discr_conj}.
The application to nonlinear Dirichlet problems, total variation
minimization, and elliptic obstacle problems is discussed in the
subsequent Sections~\ref{sec:nonlin_diri}-\ref{sec:obstacle}.
Numerical experiments are presented in Section~\ref{sec:num_ex}.


\section{Notation and finite element spaces}\label{sec:fe_spaces}
For a sequence of regular triangulations $(\cT_h)_{h>0}$, where $h>0$ 
refers to a maximal mesh-size that tends to zero, the set of
elementwise polynomial functions or vector fields of maximal 
polynomial degree $k \ge 0$ is defined by
\[
\cL^k(\cT_h)^\ell 
= \big\{v_h \in L^1(\O;\R^\ell): 
v_h|_T \in P_k(T)^\ell \mbox{ for all } T\in \cT_h\big\}.
\]
We let $\Pi_h : L^1(\O;\R^\ell) \to \cL^0(\cT_h)^\ell$ denote the
$L^2$ projection onto elementwise constant functions or vector
fields and note that $\Pi_h$ is self-adjoint, i.e., 
\[
\int_\O \Pi_h f g \dv{x} = \int_\O f \Pi_h g \dv{x} 
\] 
for all $f,g\in L^1(\O)$.
We let $\cS_h$ denote the set of sides of elements and 
define the mesh-size function $h_\cS|_S = h_S = \diam(S)$ for all 
sides $S\in \cS_h$. We let $n_\cS:\cS_h \to \R^d$ denote a unit 
vector field given for every side $S\in \cS_h$ by
\[
n_\cS|_S = n_S
\]
for a fixed unit normal $n_S$ on $S$ which is assumed to coincide 
with the outer unit normal if $S\subset \p\O$. The jump and average on a side $S$ 
of a function $v_h\in \cL^k(\cT_h)^\ell$ are for $x\in S$ defined for inner
sides via
\[\begin{split}
\jump{v_h}(x) &= \lim_{\veps \to 0} \big(v_h(x-\veps n_S)- v_h(x+\veps n_S)\big), \\
\{v_h\}(x) &=  \lim_{\veps \to 0} \frac12 \big(v_h(x-\veps n_S)+ v_h(x+\veps n_S)\big).
\end{split}\]
For $S\subset \p\O$ we set 
\[
\jump{v_h} = \{v_h\} = v_h.
\]
The integral means of jumps and averages are denoted by
\[
\jump{v_h}_h = |S|^{-1} \int_S \jump{v_h} \dv{s}, \quad 
\{v_h\}_h = |S|^{-1} \int_S \{v_h\} \dv{s},
\]
which in case of elementwise affine functions coincides with the evaluation
at the midpoint $x_S$ for every $S\in \cS_h$. We define the space of
discontinuous, piecewise linear functions via 
\[
\cS^{1,dg}(\cT_h) = \cL^1(\cT_h).
\]
A space of discontinuous vector fields is given by
\[
\RT^{0,dg}(\cT_h) = \cL^0(\cT_h)^d + (\id - x_\cT) \cL^0(\cT_h),
\]
where $\id$ is the identity and $x_\cT = \Pi_h \id \in \cL^0(\cT_h)^d$ the 
elementwise constant vector field that coincides with the midpoint $x_T$ on every
$T\in \cT_h$. Differential operators on these spaces are defined 
elementwise, indicated by a subscript $h$, i.e., we have
\[
\nablah v_h|_T = \nabla (v_h|_T), \quad \diver_{\!h} z_h|_T = \diver (z_h|_T)
\]
for $v_h\in \cS^{1,dg}(\cT_h)$, $z_h\in \RT^{0,dg}(\cT_h)$ and 
all $T\in \cT_h$. The operators are also applied to weakly differentiable
functions and vector fields in which case they coincide with the weak
gradient and the weak divergence. 
By construction, any vector field $y_h\in \RT^{0,dg}(\cT_h)$
has a piecewise constant normal component $y_h\cdot n_L$ along straight 
lines $L$ with normal $n_L$. Subspaces of elementwise affine functions
and vector fields 
with certain continuity properties on element sides are given by
\[
\cS^{1,cr}_D(\cT_h) = \{v_h \in \cS^{1,dg}(\cT_h): \jump{v_h}_h|_S = 0 
\text{ for all $S\in \cS_h\setminus \GN$}\},
\]
and 
\[
\RT^0_{\!N}(\cT_h) = \{y_h \in \RT^{0,dg}(\cT_h): \jump{y_h \cdot n_S}_h|_S = 0
\text{ for all $S\in \cS_h \setminus \GD$}\},
\]
which coincide with low order Crouzeix--Raviart and Raviart--Thomas finite element
spaces introduced in~\cite{CroRav73,RavTho77}.
These spaces provide quasi-interpolation operators 
\[
\cI_h: W^{1,p}_D(\O)\to \cS^{1,cr}_D(\cT_h), \quad
\cJ_h: W^q_N(\diver;\O) \to \RT^0_{\!N}(\cT_h),
\]
with the projection properties 
\[
\nablah \cI_h v = \Pi_h \nabla v, \quad \diver \cJ_h y = \Pi_h \diver y,
\]
and the interpolation estimates 
\[\begin{split}
\|v-\cI_h v \|_{L^p(\O)} &\le c_{\cI,1} h \|\nabla v\|_{L^p(\O)}, \\
\|v- \cI_h v\|_{L^p(\O)} + h \|\nablah (v-\cI_h v)\|_{L^p(\O)} &\le c_{\cI,2} h^2 \|D^2 v\|_{L^p(\O)},
\end{split}\]
for $v\in W^{1,p}_D(\O)$ with $1\le p\le \infty$, and 
\[
\|y- \cJ_h y\|_{L^q(\O)} \le c_\cJ h \|\nabla y\|_{L^q(\O)}
\]
for $y\in W^q_N(\diver;\O)$ with $1\le q \le \infty$. We always assume that
$h$ is sufficiently small so that we have $\|\cI_h v\|_{L^p(\O)} \le \sqrt{2} \|v\|_{W^{1,p}(\O)}$ 
and $\|\cJ_h y\|_{L^q(\O)} \le \sqrt{2} \|y\|_{W^{1,q}(\O)}$. We refer
the reader to~\cite{BoBrFo13-book,BreSco08-book,Bart16-book} for details. Elementary 
calculations lead to the identities
\[
\jump{v_h y_h \cdot n_S} = 
\begin{cases}
\jump{v_h} \{y_h\cdot n_S\} + \{v_h\} \jump{y_h \cdot n_S} & \text{if } S\not \subset \p\O, \\
\jump{v_h} \{y_h\cdot n_S\} & \text{if } S\subset \GD, \\ 
\{v_h\} \jump{y_h\cdot n_S} & \text{if } S\subset \GN.
\end{cases}
\]
By carrying out an elementwise integration by parts we thus find
that for $v_h \in \cS^{1,dg}(\cT_h)$ and $y_h\in \RT^{0,dg}(\cT_h)$ 
we have
\begin{equation}\label{eq:int_parts_dg}
\begin{split}
\int_\O v_h & \diver y_h \dv{x} + \int_\O \nablah v_h \cdot y_h \dv{x} \\
& = \int_{\cS_h \setminus \GN} \jump{v_h}_h \{y_h\cdot n_S\} \dv{s}
+ \int_{\cS_h \setminus \GD} \{v_h\}_h \jump{y_h\cdot n_S} \dv{s}.
\end{split}
\end{equation}
If  $v_h \in \cS^{1,cr}_D(\cT_h)$ and 
$y_h\in \RT^0_{\!N}(\cT_h)$ then the terms on the right-hand side are
equal to zero. We furthermore note that if an elementwise constant vector
field $y_h\in \cL^0(\cT_h)^d$ satisfies
\[
\int_\O y_h \cdot \nablah v_h \dv{x} = 0
\]
for all $v_h\in \cS^{1,cr}_D(\cT_h)$ then it belongs to $\RT^0_{\! N}(\cT_h)$.
This fact follows from an elementwise integration by parts with $v_h=\vphi_S$ for the
Crouzeix--Raviart basis functions $\vphi_S$ associated with sides $S\in \cS_h\setminus \GD$.
To bound functionals defined by integrals on the skeleton 
$\cS_h$ we use the discrete trace inequality 
\begin{equation}\label{eq:trace_est}
\|h_\cS \psi_h\|_{L^s(\cS_h)}^s \le c_\cT \|\psi_h\|_{L^s(\O)}^s,
\end{equation}
for a piecewise linear function $\psi_h \in \cL^1(\cT_h)$, $s\ge 1$,
and a constant $c_\cT$ that depends on the geometry of $\cT_h$. 


\section{Discrete conjugation}\label{sec:discr_conj}

We collect the jump and average terms needed for the discontinuous Galerkin
discretizations in functionals $J_h$ and $K_h$. The results 
of this section apply to general classes of regular polyhedral partitions.

\begin{definition}[Jumps and averages]
Let $r,s\ge 1$ and let $\a_\cS,\b_\cS: \cS_h \to \R_{\ge 0}$ be piecewise
constant. For $u_h\in \cS^{1,dg}(\cT_h)$ and $z_h\in \RT^{0,dg}(\cT_h)$ define 
\[\begin{split}
J_h(u_h)  &= \frac1r \|\a_\cS^{-1} \jump{u_h}_h\|_{L^r(\cS_h \setminus \GN)}^r  
+ \frac1s \|\b_\cS \{u_h\}_h\|_{L^s(\cS_h \setminus \GD)}^s, \\
K_h(z_h) & = \frac{1}{r'} \|\a_\cS \{z_h\cdot n_\cS\}\|_{L^{r'}(\cS_h \setminus \GN)}^{r'}
+  \frac{1}{s'} \| \b_\cS^{-1} \jump{z_h\cdot n_\cS}\|_{L^{s'}(\cS_h \setminus \GD)}^{s'} ,
\end{split}\]
where we require $\jump{u_h}_h = 0$ if $\a_\cS=0$ and 
$\jump{z_h\cdot n_\cS}=0$ if $\b_\cS=0$. For $r=1$ or $s=1$ the
functionals $(1/r')\|\cdot\|_{L^{r'}}^{r'}$ or $(1/s')\|\cdot\|_{L^{s'}}^{s'}$ 
are interpreted as indicator functionals $I_{K_1(0)}$ of the closed  unit ball $K_1(0)$.
\end{definition}

To show that the functionals $J_h$ and $K_h$ are in discrete duality 
we let $\phi^*$ and $\psi^*$ be the convex conjugates of the convex functions 
$\phi$ and $\psi$, i.e., 
\[
\phi^*(y) = \sup_{v\in \R^n} y\cdot v- \phi(v), \quad
\psi^*(x,s) = \sup_{t\in \R} s \,t - \psi(x,t).
\]
For simple power functionals the conjugate is determined by the conjugate
exponent, i.e., for a factor $c\ge 0$ and an exponent $\s\ge 1$ we have
\[
g(v) = \frac{1}{\s} c^\s |v|^\s \quad \Longleftrightarrow \quad 
g^*(w) = 
\begin{cases}
\frac{1}{\s'} c^{-\s'} |w|^{\s'} & \mbox{for } \s > 1, \\
I_{K_1(0)}(c^{-1} w) & \mbox{for } \s = 1,
\end{cases}
\]
where $\s' = \s/(\s-1)$ and $I_{K_1(0)}$ is the indicator functional of 
the closed unit ball around~0. The definition of $g^*$ leads to 
Fenchel's inequality
\begin{equation}\label{eq:fenchel}
v\cdot w \le g(v) + g^*(w),
\end{equation}
where equality holds if and only if $w= Dg(v)$ or equivalently
$v= D g^*(w)$. 

\begin{proposition}[Discrete conjugation]\label{prop:discr_conv_conj}
For $u_h\in \cS^{1,dg}(\cT_h)$ and $z_h \in \RT^{0,dg}(\cT_h)$ define
\[\begin{split}
V_h(u_h) &= \int_\O \phi(\nablah u_h) \dv{x} + J_h(u_h), \\
W_h(z_h) &= - \int_\O \phi^*(\Pi_{0,h} z_h) \dv{x} - K_h(z_h).
\end{split}\]
Given any $\ou_h\in \cL^0(\cT_h)$ we have that
\[\begin{split}
\inf\big\{V_h(u_h): & \, u_h \in \cS^{1,dg}(\cT_h), \, \Pi_h u_h = \ou_h\big\} \\
& \ge \sup\big\{ W_h(z_h) - (\ou_h, \diver_h z_h) : z_h \in \RT^{0,dg}(\cT_h) \big\}.
\end{split}\]
\end{proposition}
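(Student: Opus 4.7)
The plan is to derive the inequality from Fenchel's inequality~\eqref{eq:fenchel} combined with the elementwise integration-by-parts identity~\eqref{eq:int_parts_dg}. Fix $u_h \in \cS^{1,dg}(\cT_h)$ with $\Pi_h u_h = \ou_h$ and an arbitrary test field $z_h \in \RT^{0,dg}(\cT_h)$; it suffices to show
\[
V_h(u_h) + (\ou_h, \diver_h z_h) - W_h(z_h) \ge 0,
\]
since then passing to the infimum over admissible $u_h$ on the left and the supremum over $z_h$ on the right yields the assertion.

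First I would recast the duality pairing. Because $z_h|_T = a_T + b_T(\id - x_T)$, the divergence $\diver_h z_h$ is elementwise constant, so self-adjointness of $\Pi_h$ together with the constraint $\Pi_h u_h = \ou_h$ gives $\int_\O u_h \diver_h z_h\dv{x} = (\ou_h,\diver_h z_h)$. Analogously, elementwise constancy of $\nablah u_h$ yields $\int_\O \nablah u_h \cdot z_h\dv{x} = \int_\O \nablah u_h \cdot \Pi_{0,h} z_h\dv{x}$. Inserting both into~\eqref{eq:int_parts_dg} produces
\[
(\ou_h, \diver_h z_h) + \int_\O \nablah u_h \cdot \Pi_{0,h} z_h\dv{x} = \int_{\cS_h\setminus\GN}\jump{u_h}_h\{z_h\cdot n_\cS\}\dv{s} + \int_{\cS_h\setminus\GD}\{u_h\}_h\jump{z_h\cdot n_\cS}\dv{s}.
\]
Next I would bound each product on the right by Fenchel. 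The bulk integrand satisfies $\nablah u_h \cdot \Pi_{0,h} z_h \le \phi(\nablah u_h) + \phi^*(\Pi_{0,h} z_h)$ pointwise. Factoring $\jump{u_h}_h\{z_h\cdot n_\cS\} = (\a_\cS^{-1}\jump{u_h}_h)(\a_\cS\{z_h\cdot n_\cS\})$ and applying the scalar Young inequality for the conjugate exponents $r,r'$ yields
\[
\int_{\cS_h\setminus\GN}\jump{u_h}_h\{z_h\cdot n_\cS\}\dv{s} \le \tfrac{1}{r}\|\a_\cS^{-1}\jump{u_h}_h\|_{L^r(\cS_h\setminus\GN)}^r + \tfrac{1}{r'}\|\a_\cS\{z_h\cdot n_\cS\}\|_{L^{r'}(\cS_h\setminus\GN)}^{r'},
\]
and an analogous bound holds for the second surface integral with $\b_\cS$ and $s,s'$. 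Substituting the three displays into the identity from step one and rearranging reproduces exactly $V_h(u_h) + (\ou_h, \diver_h z_h) \ge W_h(z_h)$.

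The main point that requires care is the bookkeeping of the degenerate cases absorbed into the definition of $J_h$ and $K_h$: on sides where $\a_\cS = 0$, the convention $\jump{u_h}_h = 0$ makes both the surface product and both sides of Young's inequality vanish; for $r = 1$ the factor $(1/r')|\cdot|^{r'}$ is to be read as the indicator of $K_1(0)$, so that Young's inequality specializes to $|ab| \le |a|$ when $|b| \le 1$ and is trivially satisfied otherwise; the conditions involving $\b_\cS$ and $s$ are handled identically. No further subtlety arises, since every other manipulation is either an algebraic identity, a property of $\Pi_h$, or a pointwise application of~\eqref{eq:fenchel}.
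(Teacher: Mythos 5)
Your proposal is correct and follows essentially the same route as the paper: apply the elementwise integration-by-parts identity~\eqref{eq:int_parts_dg} (after reducing the pairings to piecewise-constant representatives via $\Pi_h$), then bound the bulk term and the two surface products by Fenchel's/Young's inequality with the conjugate exponent pairs $(r,r')$ and $(s,s')$, and take the infimum and supremum. The only cosmetic difference is that the paper tracks the signs explicitly by writing $(-\jump{u_h}_h)\{z_h\cdot n_\cS\}$ and $(-\{u_h\}_h)\jump{z_h\cdot n_\cS}$, whereas you state Young's inequality for the products with the opposite sign; since the inequality holds for the absolute values this is immaterial, and your explicit treatment of the degenerate cases $\a_\cS=0$ and $r=1$ is a welcome addition that the paper leaves implicit in the definitions of $J_h$ and $K_h$.
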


\begin{proof}
For arbitrary $z_h \in \RT^{0,dg}(\cT_h)$ and $u_h \in \cS^{1,dg}(\cT_h)$ 
with $\Pi_h u_h = \ou_h$ we use the integration-by-parts 
formula~\eqref{eq:int_parts_dg} to verify that 
\[\begin{split}
W_h(z_h) - (\ou_h,&\diver_h z_h) 
= - \int_\O \phi^*(\Pi_{0,h} z_h) \dv{x} + \int_\O \nablah u_h \cdot \Pi_h z_h \dv{x} \\
&\quad - \frac{1}{r'} \int_{\cS_h \setminus \GN} \a_\cS^{r'} |\{z_h\cdot n_\cS\}|^{r'} \dv{s}  
- \int_{\cS_h \setminus \GN} \jump{u_h}_h \{z_h \cdot n_\cS\} \dv{s} \\
&\quad - \frac{1}{s'} \int_{\cS_h \setminus \GD} \b_\cS^{-s'} |\jump{z_h\cdot n_\cS}|^{s'} \dv{s}
- \int_{\cS_h\setminus \GD} \{u_h\}_h \jump{z_h \cdot n_\cS}\dv{s}.
\end{split}\]
With Fenchel's inequality we deduce that
\[
- \phi^*(\Pi_{0,h} z_h)  + \nablah u_h \cdot \Pi_h z_h \le \phi(\nablah u_h),
\]
and
\[
- \frac{1}{r'}  \a_\cS^{r'} |\{z_h\cdot n_\cS\}|^{r'}  
+  (-\jump{u_h}_h) \{z_h \cdot n_\cS\} 
\le \frac{1}{r}  \a_\cS^{-r} |\jump{u_h}_h|^r , 
\]
as well as 
\[
- \frac{1}{s'} \b_\cS^{-s'} |\jump{z_h\cdot n_\cS}|^{s'} 
+ (-\{u_h\}_h) \jump{z_h \cdot n_\cS}
\le  \frac{1}{s} \b_\cS^s |\{u_h\}_h|^s .
\]
On combining the estimates
and noting that $u_h$ and $z_h$ are arbitrary, we deduce the statement. 
\end{proof}

The discrete convex conjugates lead to a canonical
discrete dual problem.

\begin{theorem}[Discrete duality]\label{thm:discr_duality} 
For $u_h\in \cS^{1,dg}(\cT_h)$ let 
\[
I_h(u_h) = \int_\O \phi(\nablah u_h) + \psi_h(x,\Pi_h u_h) \dv{x} + J_h(u_h),
\]
where $\psi_h:\O\times \R \to \Rinf$ is elementwise constant with respect
to the first argument. Then 
with the discrete dual functional defined for $z_h\in \RT^{0,dg}(\cT_h)$ by
\[
D_h(z_h) = -\int_\O \phi^*(\Pi_h z_h) + \psi_h^*(x,\diver_h z_h) \dv{x} -K_h(z_h)
\]
we have
\[
I _h(u_h) \ge D_h(z_h).
\]
\end{theorem}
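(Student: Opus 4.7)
The plan is to reduce the theorem to Proposition~\ref{prop:discr_conv_conj} plus a pointwise application of Fenchel's inequality to the low-order term $\psi_h$. The functional $I_h$ differs from $V_h$ only by the term $\int_\O \psi_h(x,\Pi_h u_h)\dv{x}$, and the functional $D_h$ differs from $W_h$ only by $-\int_\O \psi_h^*(x,\diver_h z_h)\dv{x}$, so the extra $\psi$-contributions should be matched against the coupling term $(\Pi_h u_h,\diver_h z_h)$ that Proposition~\ref{prop:discr_conv_conj} produces.

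Concretely, I would proceed as follows. First, set $\ou_h = \Pi_h u_h \in \cL^0(\cT_h)$. Since the given $u_h$ itself satisfies $\Pi_h u_h = \ou_h$, and the given $z_h$ is an admissible test function in the supremum, Proposition~\ref{prop:discr_conv_conj} yields
\[
V_h(u_h) \;\ge\; W_h(z_h) - \int_\O \Pi_h u_h \cdot \diver_h z_h \dv{x},
\]
which already encapsulates the integration-by-parts identity~\eqref{eq:int_parts_dg} and the Fenchel inequality for $\phi$ and for the jump/average terms.

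Second, I would apply Fenchel's inequality~\eqref{eq:fenchel} pointwise to the one-dimensional convex function $t\mapsto \psi_h(x,t)$ with its conjugate $s\mapsto \psi_h^*(x,s)$. Here I use that $\Pi_h u_h$ and $\diver_h z_h$ are both elementwise constant (the latter because any $z_h\in \RT^{0,dg}(\cT_h)$ has elementwise constant divergence, by the definition of the space), and that $\psi_h(x,\cdot)$ is elementwise constant in $x$, so the pointwise inequality
\[
\Pi_h u_h(x)\, \diver_h z_h(x) \;\le\; \psi_h(x,\Pi_h u_h(x)) + \psi_h^*(x,\diver_h z_h(x))
\]
integrates cleanly to
\[
\int_\O \Pi_h u_h \cdot \diver_h z_h \dv{x} \;\le\; \int_\O \psi_h(x,\Pi_h u_h)\dv{x} + \int_\O \psi_h^*(x,\diver_h z_h)\dv{x}.
\]

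Finally, adding this to the inequality from Proposition~\ref{prop:discr_conv_conj} makes the coupling term cancel and regroups the surviving contributions exactly as
\[
I_h(u_h) = V_h(u_h) + \int_\O \psi_h(x,\Pi_h u_h)\dv{x} \;\ge\; W_h(z_h) - \int_\O \psi_h^*(x,\diver_h z_h)\dv{x} = D_h(z_h),
\]
which is the claim. There is no real obstacle here; the structural work (the integration by parts and the Fenchel estimates for $\phi$ and for the jump/average duality) has already been carried out in Proposition~\ref{prop:discr_conv_conj}, and the only ingredient left is a one-dimensional Fenchel inequality applied to $\psi_h$, which is well defined pointwise because $\diver_h z_h$ and $\Pi_h u_h$ are elementwise constant.
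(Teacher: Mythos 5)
Your argument is correct and is essentially identical to the paper's proof: both invoke Proposition~\ref{prop:discr_conv_conj} with $\ou_h=\Pi_h u_h$ and then absorb the coupling term $(\ou_h,\diver_h z_h)$ via a pointwise (elementwise) Fenchel inequality for $\psi_h$ and $\psi_h^*$. No further comment is needed.
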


\begin{proof}
With the inequality of Proposition~\ref{prop:discr_conv_conj} we have,
using $\ou_h = \Pi_h u_h$ that 
\[\begin{split}
&\int_\O \phi(\nablah u_h) \dv{x} + J_h(u_h) + \int_\O \psi_h(x,\ou_h) \dv{x} \\
&\ge - \int_\O \phi^*(\Pi_{0,h} z_h) \dv{x} - K_h(z_h) - (\ou_h, \diver_h z_h) + \int_\O \psi_h(x,\ou_h) \dv{x}.
\end{split}\]
Fenchel's inequality shows that on every $T\in \cT_h$ we have 
\[
\ou_h \diver_h z_h \le  \psi_h(x,\ou_h) + \psi_h^*(x,\diver_h z_h).
\]
This implies the asserted inequality. 
\end{proof}

A strong duality relation can be established under additional conditions. 
We consider a particular but typical definition of the penalty terms. 
The formula stated in the following proposition 
provides a discrete dual solution via a simple postprocessing
of the discreteprimal solution and generalizes a result from~\cite{Mari85}.

\begin{proposition}[Reconstruction and strong duality]
Assume that $J_h$ and $K_h$ are defined with $r=s=2$ and $\b_\cS=0$
and that $\phi$ and $\psi$ are continuously differentiable.
If $u_h\in \cS^{1,dg}(\cT_h)$ is minimal for $I_h$ then the
vector field
\[
\tz_h = D\phi(\nablah u_h) + d^{-1} D\psi_h(\Pi_h u_h) (\id-x_\cT)
\]
belongs to $\RT^0_{\!N}(\cT_h)$ and is maximal for $D_h$ with
$I_h(u_h) = D_h(\tz_h)$. 
\end{proposition}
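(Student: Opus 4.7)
The plan is to exploit the Euler--Lagrange conditions for the minimizer $u_h$ together with the integration-by-parts identity \eqref{eq:int_parts_dg} to identify the reconstructed field $\tz_h$ as satisfying exactly those pointwise relations that turn every Fenchel inequality in the proof of Theorem~\ref{thm:discr_duality} into an equality.

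First I would compute the relevant projections of $\tz_h$. On each $T\in\cT_h$ the vector field $\id-x_\cT$ has mean zero, so $\Pi_h \tz_h = D\phi(\nablah u_h)$; since $\diver(\id-x_\cT)=d$, one likewise obtains $\diver_h \tz_h = D\psi_h(\Pi_h u_h)$. Then I would write down the Euler--Lagrange equation for the (differentiable) functional $I_h$ at~$u_h$: for every $v_h\in \cS^{1,dg}(\cT_h)$,
\[
\int_\O D\phi(\nablah u_h)\cdot \nablah v_h \dv{x}
+ \int_\O D\psi_h(\Pi_h u_h)\Pi_h v_h \dv{x}
+ \int_{\cS_h\setminus\GN} \a_\cS^{-2}\jump{u_h}_h\jump{v_h}_h \dv{s} = 0.
\]
Using $\nablah v_h$ elementwise constant and $\int_T(\id-x_T)\dv{x}=0$, the first volume integral equals $\int_\O \tz_h\cdot \nablah v_h\dv{x}$, while self-adjointness of $\Pi_h$ together with $\diver_h\tz_h$ being piecewise constant turns the second into $\int_\O v_h\diver_h\tz_h\dv{x}$. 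Applying \eqref{eq:int_parts_dg} to $v_h$ and $\tz_h$ therefore reduces the Euler--Lagrange identity to
\[
\int_{\cS_h\setminus\GN}\jump{v_h}_h\bigl(\{\tz_h\cdot n_\cS\}-\a_\cS^{-2}\jump{u_h}_h\bigr)\dv{s}
+\int_{\cS_h\setminus\GD}\{v_h\}_h \jump{\tz_h\cdot n_\cS}\dv{s}=0.
\]

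Next I would test this with Crouzeix--Raviart basis functions $\vphi_S\in\cS^{1,cr}_D(\cT_h)$, for which $\jump{\vphi_S}_h\equiv 0$ and $\{\vphi_S\}_h$ is the Kronecker $\delta$ at midpoints. Since $\tz_h\in\RT^{0,dg}(\cT_h)$ has normal component that is constant on each side, this forces $\jump{\tz_h\cdot n_\cS}=0$ on every $S\in\cS_h\setminus\GD$, which is exactly the condition $\tz_h\in\RT^0_{\!N}(\cT_h)$. Plugging this back and letting $v_h$ range over all of $\cS^{1,dg}(\cT_h)$ yields the dual pointwise relation
\[
\{\tz_h\cdot n_\cS\}=\a_\cS^{-2}\jump{u_h}_h \quad\text{on } \cS_h\setminus\GN.
\]

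Finally I would check Fenchel equality in all three Young-type inequalities used in the proofs of Proposition~\ref{prop:discr_conv_conj} and Theorem~\ref{thm:discr_duality}: the relation $\Pi_h\tz_h=D\phi(\nablah u_h)$ gives equality in the $\phi$/$\phi^*$ bound, $\diver_h\tz_h=D\psi_h(\Pi_h u_h)$ gives equality in the $\psi_h$/$\psi_h^*$ bound, and the jump--average relation just derived gives equality in the skeleton term (with the second skeleton term absent because $\b_\cS=0$ and $\jump{\tz_h\cdot n_\cS}=0$). Tracing the equalities through the proof of Theorem~\ref{thm:discr_duality} gives $I_h(u_h)=D_h(\tz_h)$, and the weak duality $I_h(u_h)\ge D_h(z_h)$ for all admissible $z_h$ then immediately yields maximality of $\tz_h$. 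The only delicate point is the test-function argument identifying $\tz_h$ with an element of $\RT^0_{\!N}(\cT_h)$; everything else is bookkeeping of Fenchel equality cases.
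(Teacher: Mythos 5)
Your argument is correct and rests on the same mechanism as the paper's proof: compute $\Pi_h\tz_h=D\phi(\nablah u_h)$ and $\diver_h\tz_h=D\psi_h(\Pi_h u_h)$, combine the discrete Euler--Lagrange equations with the integration-by-parts formula~\eqref{eq:int_parts_dg}, and then track equality through each Fenchel inequality in Proposition~\ref{prop:discr_conv_conj} and Theorem~\ref{thm:discr_duality}. The difference lies in how the two intermediate identifications are executed: the paper proves $\tz_h\in\RT^0_{\!N}(\cT_h)$ by subtracting an auxiliary field $y_h\in\RT^0_{\!N}(\cT_h)$ with prescribed divergence and invoking the characterization of $\RT^0_{\!N}(\cT_h)$ via orthogonality to Crouzeix--Raviart gradients, and it obtains the side relation through an explicit side-integral computation with the element-supported functions $\vphi_{S,T}$; you instead derive a single skeleton identity and read off both facts by testing first with Crouzeix--Raviart basis functions (using that $\jump{\tz_h\cdot n_\cS}$ is constant on each side, so a vanishing mean already gives membership in $\RT^0_{\!N}(\cT_h)$) and then with general $v_h$. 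This is a cleaner, more unified route to the same two facts. One sign should be fixed: with the stated jump conventions the bracket in your skeleton identity is $\{\tz_h\cdot n_\cS\}+\a_\cS^{-2}\jump{u_h}_h$, so the resulting relation is $\{\tz_h\cdot n_\cS\}=-\a_\cS^{-2}\jump{u_h}_h$, which is precisely the equality case of the Fenchel inequality applied to the pairing $(-\jump{u_h}_h)\{z_h\cdot n_\cS\}$ in Proposition~\ref{prop:discr_conv_conj}; with the $+$ sign you wrote (which matches the relation as printed in the paper) Fenchel equality on the skeleton would fail wherever $\jump{u_h}_h\neq 0$, so the sign genuinely matters for your equality-tracing argument, even though the paper's own final computation only uses the square of this quantity and is therefore insensitive to it.
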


\begin{proof}
We note that $u_h$ solves the discrete Euler--Lagrange equations
\[\begin{split}
\int_\O D\phi(\nablah u_h) \cdot \nablah  v_h \dv{x} 
+ \int_\O & D\psi_h(\Pi_h u_h) v_h \dv{x} \\
&= - \int_{\cS_h\setminus \GN} \a_\cS^{-2} \jump{u_h}_h \jump{v_h}_h \dv{s}
\end{split}\]
for all $v_h \in \cS^{1,dg}(\cT_h)$ and the term on the 
right-hand side vanishes if $v_h\in \cS^{1,cr}_D(\cT_h)$. To show that 
$\tz_h\in \RT^0_{\!N}(\cT_h)$ we choose $y_h\in \RT^0_{\!N}(\cT_h)$
with $-\diver y_h = D\psi_h(\Pi_h u_h)$. Then, 
$\tz_h - y_h$ is elementwise constant and we have 
\[
\int_\O (\tz_h -y_h) \cdot \nablah v_h \dv{x}
= \int_\O \big(D\phi(\nablah u_h) -y_h) \cdot \nablah v_h\dv{x}
= 0
\]
for all $v_h\in \cS^{1,cr}_D(\cT_h)$. In particular, we
deduce that $\tz_h - y_h \in \RT^0_{\!N}(\cT_h)$ and therefore
$\tz_h\in \RT^0_{\!N}(\cT_h)$. Given a side $S\in \cS_h\setminus \GD$
with adjacent element $T\in \cT_h$ we let
$\vphi_{S,T} \in \cS^{1,dg}(\cT_h)$ be the function that is supported
on $T$, vanishes in those midpoints of sides that do not belong
to $S$, and satisfies $\vphi_{S,T}(x_S) = 1$. We have $\jump{\vphi_{S,T}}_h =-1$
on $S$ and the discrete Euler--Lagrange equations
and an integration by parts yield that
\[\begin{split}
\int_S \a_S^{-2} \jump{u_h}_h \dv{s} =
\int_S D\phi (\nablah u_h) \cdot n_S \dv{s} 
+ \frac1d \int_S D \psi_h (\Pi_h u_h)  (x-x_T)\cdot n_S \dv{s} 
\end{split}\]
where we used $\int_T \vphi_{S,T} \dv{x} = |T|/(d+1) = (x-x_T)\cdot n_S |S|/d$
for every $x\in S$.
Since $\tz_h \cdot n_\cS$ is constant and continuous on $\cS_h$ we deduce that 
$\a_\cS^{-2} \jump{u_h}_h = \{\tz_h \cdot n_\cS\}$. 
Using the identities 
$\Pi_h \tz_h = D\phi(\nablah u_h)$ and $\diver \tz_h = D\psi_h(\Pi_h u_h)$ 
and noting that these imply equality in~\eqref{eq:fenchel} we find that 
\[\begin{split}
\phi(\nablah u_h) &= \Pi_h \tz_h \cdot \nablah u_h - \phi^*(\Pi_h \tz_h),   \\
\psi_h(\Pi_h u_h) &= \diver \tz_h \, \Pi_h u_h  - \psi_h^*(\diver \tz_h). 
\end{split}\]
Therefore, we have that 
\[\begin{split}
- \int_\O \phi^*(\Pi_h \tz_h)  + \psi_h^*(\diver & \tz_h) \dv{x} 
=  \int_\O \phi(\nablah u_h) + \psi_h(\Pi_h u_h) \dv{x}  \\
& - \int_\O  D\phi(\nablah u_h) \cdot \nablah u_h + D\psi_h(\Pi_h u_h) \Pi_h u_h \dv{x}.
\end{split}\]
Using the discrete Euler--Lagrange equation with 
$v_h = u_h$ we find that
\[
- \int_\O  D\phi(\nablah u_h) \cdot \nablah u_h + D\psi_h(\Pi_h u_h) \Pi_h u_h \dv{x}
= \int_{\cS_h \setminus \GN} \a_\cS^{-2} \jump{u_h}_h^2 \dv{s}.
\]
By combining the last two identities and incorporating
$\a_\cS^{-2} \jump{u_h}_h = \{\tz_h \cdot n_\cS\}$ we deduce that $D_h(\tz_h) = I_h(u_h$).
\end{proof}

\section{Nonlinear Dirichlet problems}\label{sec:nonlin_diri}
We derive an error estimate for a class of nonlinear Dirichlet problems
with linear low order terms. We say that $\phi\in C^1(\R^d)$ is {\em $\s$-coercive}
if there exists a nonnegative functional $\s:\R^d\times \R^d \to \R_{\ge 0}$
such that for all $a,b\in \R^d$ we have
\[
\phi(a) + D\phi(a)\cdot (b-a) + \s(a;b) \le \phi(b).
\]
The low order term is assumed to be given by the function
\[
\psi(x,s) = -f(x) s
\]
for some given $f\in L^{p'}(\O)$. We assume below that the corresponding 
continuous problems, defined with 
\[\begin{split}
I(u) &= \int_\O \phi(\nabla u) \dv{x} - \int_\O f u \dv{x}, \\
D(z) &= -\int_\O \phi^*(z) \dv{x} - I_{-f}(\diver z), 
\end{split}\]
are in strong duality and refer
the reader to~\cite{AtBuMi06-book,Rock70-book} for sufficient conditions and
general statements.
We have that the indicator functional $\psi^*(x,t) = I_{\{-f(x)\}}(t)$ 
enforces the constraint $\diver z = -f$ and that
the discrete primal and dual problem are given by the functionals
\[\begin{split}
I_h(u_h) &= \int_\O \phi(\nablah u_h) \dv{x} -\int_\O f_h u_h \dv{x} + J_h(u_h), \\
D_h(z_h) &= - \int_\O \phi^*(\Pi_h z_h) \dv{x} - I_{\{-f_h\}}(\diver z_h) - K_h(z_h),
\end{split}\]
where we assume $f_h = \Pi_h f$. The indicator functional $I_{\{-f_h\}}$ enforces 
the constraint $\diver z_h = -f_h$. 

\begin{proposition}[Error estimate]\label{prop:error_nonlin_diri}
Assume that $\phi \in C^1(\R^d)$ is strictly convex and $\s$-coercive and assume
that strong duality holds for the continuous problem. 
For the minimizer $u\in W^{1,p}_D(\O)$ of $I$ and the discrete 
minimizer $u_h\in \cS^{1,dg}(\cT_h)$ for $I_h$ we have with a solution
$z\in W^q_N(\diver;\O)$ of the dual problem satisfying the
regularity condition $z\in W^{1,1}(\O;\R^d)$ that
\[\begin{split}
\int_\O \s(\nablah u_h;\nablah \cI_h u) \dv{x}
& \le \int_\O \big(D\phi^*(z)-D\phi^*(\Pi_h\cJ_h z)\big) \cdot (z-\Pi_h\cJ_h z)\dv{x} \\
& \qquad + J_h(\cI_h u) + K_h(\cJ_h z).
\end{split}\]
\end{proposition}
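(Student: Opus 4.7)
The plan is to combine three ingredients: strong convexity from the $\sigma$-coercivity of $\phi$, the discrete duality inequality of Theorem~\ref{thm:discr_duality}, and the projection identities of the quasi-interpolants $\cI_h$ and $\cJ_h$, with the final cancellation supplied by Jensen's inequality. Since $u_h$ minimizes the convex functional $I_h$, whose gradient part $v_h\mapsto\int_\O\phi(\nablah v_h)\,dx$ is $\sigma$-coercive while the linear term $v_h\mapsto-\int_\O f_h v_h\,dx$ and $J_h$ are merely convex, a standard subdifferential argument gives
\[
I_h(\cI_h u)-I_h(u_h)\ge\int_\O\sigma(\nablah u_h;\nablah\cI_h u)\,dx.
\]
Moreover, $\cJ_h z\in\RT^0_{\!N}(\cT_h)\subset\RT^{0,dg}(\cT_h)$ and $\diver\cJ_h z=\Pi_h\diver z=-\Pi_h f=-f_h$, so the indicator $I_{\{-f_h\}}(\diver\cJ_h z)$ vanishes and Theorem~\ref{thm:discr_duality} yields $I_h(u_h)\ge D_h(\cJ_h z)$. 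It thus suffices to bound $I_h(\cI_h u)-D_h(\cJ_h z)$ by the right-hand side of the claim.

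To do so, expand the difference using $\nablah\cI_h u=\Pi_h\nabla u$ and apply the integration-by-parts formula~\eqref{eq:int_parts_dg} with $v_h=\cI_h u\in\cS^{1,cr}_D(\cT_h)$ and $y_h=\cJ_h z\in\RT^0_{\!N}(\cT_h)$; both skeletal integrals on the right vanish by the defining CR and RT conditions, giving
\[
\int_\O f_h\,\cI_h u\,dx=-\int_\O\cI_h u\,\diver\cJ_h z\,dx=\int_\O\Pi_h\nabla u\cdot\Pi_h\cJ_h z\,dx
\]
(the second $\Pi_h$ is free since $\Pi_h\nabla u$ is piecewise constant). Consequently,
\[
I_h(\cI_h u)-D_h(\cJ_h z)=\int_\O\bigl[\phi(\Pi_h\nabla u)+\phi^*(\Pi_h\cJ_h z)-\Pi_h\nabla u\cdot\Pi_h\cJ_h z\bigr]\,dx+J_h(\cI_h u)+K_h(\cJ_h z).
\]

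The main obstacle is to bound the Fenchel residual in the first integral by the symmetric Bregman quantity appearing on the right-hand side of the claim. Writing $B_\phi(a,a')=\phi(a)-\phi(a')-D\phi(a')\cdot(a-a')$ for the Bregman divergence and using the continuous Fenchel equality $\phi(\nabla u)+\phi^*(z)=\nabla u\cdot z$ together with $D\phi(\nabla u)=z$ and $D\phi^*(z)=\nabla u$, one checks pointwise that the integrand equals
\[
B_\phi(\Pi_h\nabla u,\nabla u)+B_{\phi^*}(\Pi_h\cJ_h z,z)-(z-\Pi_h\cJ_h z)\cdot(\nabla u-\Pi_h\nabla u).
\]
After integration, $L^2$-orthogonality of $\Pi_h$ collapses the cross term to $\int_\O(z-\Pi_h z)\cdot(\nabla u-\Pi_h\nabla u)\,dx$, and Jensen's inequality $\int_\O\phi(\Pi_h\nabla u)\,dx\le\int_\O\phi(\nabla u)\,dx$ (combined with $D\phi(\nabla u)=z$) yields exactly the same quantity as an upper bound on $\int_\O B_\phi(\Pi_h\nabla u,\nabla u)\,dx$, so the two cancel. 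The remaining term is controlled by the nonnegativity of $B_{\phi^*}(z,\Pi_h\cJ_h z)$:
\[
B_{\phi^*}(\Pi_h\cJ_h z,z)\le B_{\phi^*}(z,\Pi_h\cJ_h z)+B_{\phi^*}(\Pi_h\cJ_h z,z)=\bigl(D\phi^*(z)-D\phi^*(\Pi_h\cJ_h z)\bigr)\cdot(z-\Pi_h\cJ_h z),
\]
which gives the stated estimate. The key insight is that Jensen's inequality supplies exactly the cancellation needed to convert the Fenchel residual at the projected pair $(\Pi_h\nabla u,\Pi_h\cJ_h z)$ into the Bregman-symmetric expression on the right-hand side.
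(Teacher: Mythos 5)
Your proof is correct, and its skeleton coincides with the paper's: coercivity of $I_h$ via $\sigma$, the discrete duality $I_h(u_h)\ge D_h(\cJ_h z)$ with the admissibility check $\diver\cJ_h z=-f_h$, Jensen's inequality through $\nablah\cI_h u=\Pi_h\nabla u$, and convexity of $\phi^*$ at the end. Where you differ is in the middle bookkeeping. The paper invokes the integrated strong duality identity $I(u)=D(z)$ to replace $\int_\O\phi(\nabla u)\,\dv{x}$ by $-\int_\O\phi^*(z)\,\dv{x}+\int_\O fu\,\dv{x}$ and then massages the resulting source term $\int_\O fu-f_h\Pi_h\cI_h u\,\dv{x}$ via two integrations by parts into $\int_\O D\phi^*(z)\cdot(z-\Pi_h\cJ_h z)\,\dv{x}$; you instead do the discrete integration by parts first, reduce everything to the pointwise Fenchel residual $\phi(\Pi_h\nabla u)+\phi^*(\Pi_h\cJ_h z)-\Pi_h\nabla u\cdot\Pi_h\cJ_h z$, and split it into Bregman divergences using the pointwise extremality relations $z=D\phi(\nabla u)$, $\nabla u=D\phi^*(z)$ (the local form of strong duality). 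Your algebra checks out: the cross term collapses by $L^2$-orthogonality of $\Pi_h$, Jensen cancels $\int_\O B_\phi(\Pi_h\nabla u,\nabla u)\,\dv{x}$ against it, and symmetrizing $B_{\phi^*}$ recovers the stated right-hand side. A small dividend of your formulation is that it makes the slack explicit — you discard the nonnegative terms $\int_\O B_\phi\,\dv{x}$ (after cancellation) and add $B_{\phi^*}(z,\Pi_h\cJ_h z)\ge 0$ — which connects directly to the paper's remark that the estimate can be sharpened by exploiting a coercivity property of $\phi^*$ in~\eqref{eq:conv_phi_star}.
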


\begin{proof}
The minimality of $u_h$ implies that
\[
\d_h^2 = \int_\O \s(\nablah u_h;\nablah \cI_h u) \dv{x} 
\le I_h(\cI_h u) - I_h(u_h).
\]
With the duality relation $I_h(u_h) \ge D_h(z_h) \ge D_h(\cJ_h z)$ we infer that
\[\begin{split}
\d_h^2 &\le I_h(\cI_h u) - D_h(\cJ_h z) \\
&= \int_\O \phi(\nablah \cI_h u) \dv{x} - \int_\O f_h \Pi_h \cI_h u \dv{x} 
+ \int_\O \phi^*(\Pi_h \cJ_h z) \dv{x} \\
& \qquad + J_h(\cI_h u_h) + K_h (\cJ_h z),
\end{split}\]
where we used that 
$\diver \cJ_h z =-f_h$. Jensen's inequality in combination with 
$\nablah \cI_h u = \Pi_h \nabla u$ and the strong
duality relation $I(u) = D(z)$ lead to 
\[
\int_\O \phi(\nablah \cI_h u) \dv{x} \le \int_\O \phi(\nabla u) \dv{x}
= - \int_\O \phi^*(z) \dv{x} + \int_\O f u \dv{x}.
\]
This implies that we have 
\[\begin{split}
\d_h^2 & \le - \int_\O \phi^*(z) \dv{x} + \int_\O fu - f_h \Pi_h \cI_h u \dv{x} 
+ \int_\O \phi^*(\cJ_h z)\dv{x} \\
&\qquad + J_h(\cI_h u_h) + K_h (\cJ_h z).
\end{split}\]
Since $\diver z= -f$ and $\diver \cJ_h z = -f_h$ it follows from the 
integration-by-parts formula~\eqref{eq:int_parts_dg}
and the identity $\nablah \cI_h u = \Pi_h \nabla u$ that 
\[\begin{split}
\int_\O fu - f_h \Pi_h \cI_h u \dv{x} 
&= \int_\O z \cdot \nabla u  - \cJ_h z  \cdot \nablah \cI_h u \dv{x} \\ 
&=  \int_\O (z- \Pi_h \cJ_h z)  \cdot \nabla  u \dv{x}.
\end{split}\]
We use that $z = D\phi(\nabla u)$ and hence $\nabla u = D\phi^*(z)$, 
i.e., 
\[
\int_\O fu - f_h \Pi_h \cI_h u \dv{x} 
= \int_\O D\phi^*(z) \cdot (z- \Pi_h \cJ_h z) \dv{x}.
\]
The convexity of $\phi^*$ provides the relation 
\begin{equation}\label{eq:conv_phi_star}
\phi^*(\Pi_h \cJ_h z) + D\phi^*(\Pi_h \cJ_h z) \cdot (z-\Pi_h \cJ_h z) 
\le \phi^*(z).
\end{equation}
On combining the inequalities we find that
\[\begin{split}
\d_h^2 & \le 
\int_\O \big(D\phi^*(z)-D\phi^*(\Pi_h\cJ_h z)\big) \cdot (z-\Pi_h \cJ_h z)\dv{x} \\
& \qquad + J_h(\cI_h u_h) + K_h (\cJ_h z),
\end{split}\]
which implies the asserted estimate. 
\end{proof}

\begin{remark}
The estimate of the proposition can be improved by incorporating  
a coercivity property of $\phi^*$ in~\eqref{eq:conv_phi_star}.
\end{remark}

Under additional conditions a convergence rate can be deduced. To illustrate
this we assume for simplicity the Lipschitz property 
\[
\|D\phi^*(v)-D\phi^*(w)\|_{L^q(\O)} \le c_\phi \|v-w\|_{L^q(\O)}
\]
which can be replaced, e.g.,  by a local Lipschitz estimate. 

\begin{corollary}[Lipschitz differentiability]
In addition to the assumptions of Proposition~\ref{prop:error_nonlin_diri} 
assume that $D\phi^*$ is Lipschitz continuous. Then we have  
\[\begin{split}
\int_\O \s(\nablah u_h;& \nablah \cI_h u) \dv{x} 
 \le c_\phi \|z-\Pi_h \cJ_h z\|_{L^q(\O)}^2 \\
& \quad  +  c_\cT \|h_\cS^{-1} \a_\cS^{r'}\|_{L^\infty(\cS)}  \|\cJ_h z \|_{L^{r'}(\O)}^{r'}
+  c_\cT \| h_\cS^{-1} \b_S^s\|_{L^\infty(\cS)}  \|\cI_h u\|_{L^s(\O)}^s.
\end{split}\]
In particular, if $z\in W^{1,q}(\O;\R^d)$, $u\in W^{1,p}(\O)$,
and $\a_\cS = c_\a h_\cS^\g$, $\b_\cS = c_\b h_\cS^\s$ with 
$\g r', \s s \ge 3$ and $r'\le q$, $s\le p$ then the right-hand side
is of quadratic order. 
\end{corollary}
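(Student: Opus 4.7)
The plan is to start from the inequality of Proposition~\ref{prop:error_nonlin_diri} and post-process each of its three right-hand side terms. For the first term I would apply a pointwise Cauchy--Schwarz inequality together with the hypothesised Lipschitz bound on $D\phi^*$ to deduce
\[
\int_\O \bigl(D\phi^*(z)-D\phi^*(\Pi_h\cJ_h z)\bigr) \cdot (z-\Pi_h\cJ_h z)\dv{x}
\le c_\phi \|z-\Pi_h\cJ_h z\|_{L^q(\O)}^{2}.
\]

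For the penalty contributions the key observation is that the special quasi-interpolants kill one side of each functional. Since $\cI_h u \in \cS^{1,cr}_D(\cT_h)$, the integral means $\jump{\cI_h u}_h$ vanish on every interior side and on $\GD$, so only the $\b_\cS$ term in $J_h(\cI_h u)$ survives. Likewise, $\cJ_h z\in \RT^0_{\!N}(\cT_h)$ has a normal component that is constant on each side, so the vanishing of $\jump{\cJ_h z\cdot n_\cS}_h$ upgrades to the pointwise identity $\jump{\cJ_h z\cdot n_\cS}=0$, leaving only the $\a_\cS$ term in $K_h(\cJ_h z)$. Each surviving summand is of the form $\int_{\cS_h}(\text{weight})\,|v_h|^t\dv{s}$ with $v_h$ piecewise linear, which I would estimate by pulling out the weight,
\[
\int_{\cS_h}(\text{weight})\,|v_h|^t\dv{s}
\le \|h_\cS^{-1}(\text{weight})\|_{L^\infty(\cS_h)}\int_{\cS_h}h_\cS |v_h|^t\dv{s},
\]
and then invoking the discrete trace inequality~\eqref{eq:trace_est} to bound the remaining sidewise integral by $c_\cT\|v_h\|_{L^t(\O)}^t$ (using $|\cJ_h z\cdot n_\cS|\le |\cJ_h z|$ for the first boundary term). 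Collecting the three pieces yields the main bound of the corollary.

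For the ``in particular'' part I would substitute $\a_\cS=c_\a h_\cS^\g$ and $\b_\cS=c_\b h_\cS^\s$ into the weights to get $\|h_\cS^{-1}\a_\cS^{r'}\|_{L^\infty(\cS_h)}\lesssim h^{\g r'-1}$ and $\|h_\cS^{-1}\b_\cS^s\|_{L^\infty(\cS_h)}\lesssim h^{\s s-1}$, which are of order $h^2$ exactly when $\g r'\ge 3$ and $\s s\ge 3$. The interpolation estimates of Section~\ref{sec:fe_spaces} combined with a triangle inequality give
\[
\|z-\Pi_h\cJ_h z\|_{L^q(\O)}
\le \|z-\cJ_h z\|_{L^q(\O)} + \|\cJ_h z-\Pi_h \cJ_h z\|_{L^q(\O)}
= \cO(h)
\]
for $z\in W^{1,q}(\O;\R^d)$, so the first contribution is $\cO(h^2)$. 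The factors $\|\cJ_h z\|_{L^{r'}(\O)}$ and $\|\cI_h u\|_{L^s(\O)}$ are uniformly bounded under the embedding conditions $r'\le q$, $s\le p$ together with the $W^{1,q}$/$W^{1,p}$ stability bounds recalled in Section~\ref{sec:fe_spaces}.

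The only real obstacle is the exponent book-keeping when splitting the jump/average weights: choosing the correct power of $h_\cS$ to absorb so that~\eqref{eq:trace_est} applies with exponent $r'$ or $s$, and justifying that $\jump{\cJ_h z\cdot n_\cS}$ vanishes pointwise (not merely in mean) by constancy of the normal trace on each side. Everything else is a direct chain of Cauchy--Schwarz and interpolation estimates.
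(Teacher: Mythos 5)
Your proposal is correct and follows the same route as the paper: start from Proposition~\ref{prop:error_nonlin_diri}, note that $\jump{\cI_h u}_h$ vanishes on $\cS_h\setminus\GN$ and $\jump{\cJ_h z\cdot n_\cS}$ vanishes on $\cS_h\setminus\GD$ so only the average terms in $J_h$ and $K_h$ survive, and bound those via the discrete trace inequality~\eqref{eq:trace_est} after extracting the weights in $L^\infty$. The paper's proof is a one-line version of exactly this; your extra remark that constancy of the normal trace upgrades the vanishing mean of $\jump{\cJ_h z\cdot n_\cS}$ to a pointwise identity is a detail the paper leaves implicit.
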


\begin{proof}
The estimate is an immediate consequence of Proposition~\ref{prop:error_nonlin_diri}
noting that $\jump{\cI_h u}_h = 0$ for all $S\in \cS_h\setminus \GN$ and
$\jump{\cJ_h z\cdot n_\cS}_h = 0$ for all $S\in \cS_h \setminus \GD$ and
the inequalities~\eqref{eq:trace_est}. 
\end{proof}

\section{Total-variation minimization}\label{sec:tv_min}
Setting $\GN = \p\O$ and $\GD = \emptyset$ we consider the minimization of the 
functional 
\[
I(u) = |Du|(\O) + \frac{\a}{2}\|u-g\|^2,
\]
in the set of all $u\in BV(\O)\cap L^2(\O)$. We refer the reader 
to~\cite{AmFuPa00-book,Bart15-book} for definitions and 
properties of the variational problem. The dual formulation 
consists in determining $z\in W^2_N(\diver;\O)$ which is maximal
for 
\[
D(z) = -I_{K_1(0)}(z) - \frac{1}{2\a} \|\diver z + \a g\|^2 
+ \frac{\a}{2} \|g\|^2.
\]
In particular, we have that $z\in L^\infty(\O;\R^d)$ and strong
duality applies, i.e., for solutions $u$ and $z$ we have
\[
I(u) = D(z),
\]
cf., e.g.,~\cite{HinKun04}.
The discrete primal problem seeks $u_h\in \cS^{1,dg}(\cT_h)$ 
which is minimial for 
\[
I_h(u_h) = \int_\O |\nablah u_h| \dv{x} 
+ J_h(u_h) + \frac{\a}{2} \| \Pi_h(u_h - g)\|^2. 
\]
The functionals $I_h$ approximate $I$ under moderate conditions
on the discretization parameters $r$ and $p$. 

\begin{proposition}[$\Gamma$-convergence]
Assume $r\ge 1$, $s\le 2$, and that 
\[
\|\a_\cS^{r'} h_\cS^{-1} \|_{L^\infty(\cS_h)}  
+ \|\b_\cS^s h_\cS^{-1}\|_{L^\infty(\cS_h)} \to 0
\]
as $h\to 0$ where the first term is replaced by $\|\a_\cS\|_{L^\infty(\cS_h)}$
if $r=1$. We then have $I_h \to I$ in the sense of
$\G$ convergence with respect to strong convergence in $L^1(\O)$.
\end{proposition}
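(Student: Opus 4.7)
The plan is to verify the two defining conditions of $\G$-convergence with respect to strong $L^1(\O)$ convergence: the liminf inequality $I(u) \le \liminf_h I_h(u_h)$ for every sequence $u_h \to u$ in $L^1(\O)$, and the existence, for every admissible $u$, of a recovery sequence $u_h \to u$ with $\limsup_h I_h(u_h) \le I(u)$.

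For the liminf direction, I would fix $u_h \to u$ in $L^1(\O)$ and assume without loss of generality that $\liminf I_h(u_h) = L < \infty$, so that along a subsequence $\int_\O |\nablah u_h|\dv{x}$, $J_h(u_h)$ and $\|\Pi_h(u_h-g)\|$ are uniformly bounded. The TV part is treated by duality: for any test field $z\in C_c^\infty(\O;\R^d)$ with $\|z\|_{L^\infty}\le 1$, the Raviart--Thomas quasi-interpolant $\cJ_h z\in \RT^0_{\!N}(\cT_h)$ satisfies $\diver \cJ_h z = \Pi_h \diver z$ and $\|\Pi_h \cJ_h z\|_{L^\infty}\le 1+\eta_h$ with $\eta_h\to 0$. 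Applying the DG integration-by-parts formula~\eqref{eq:int_parts_dg} (the $\{u_h\}_h\jump{\cJ_h z\cdot n_\cS}$ term drops since $\cJ_h z\in\RT^0_{\!N}$) and the elementwise constancy of $\nablah u_h$ gives
\[
-\int_\O u_h \diver \cJ_h z\dv{x} = \int_\O \nablah u_h\cdot \Pi_h \cJ_h z \dv{x} - \int_{\cS_h\setminus \GN}\jump{u_h}_h\{\cJ_h z\cdot n_\cS\}\dv{s}.
\]
The first term is bounded by $(1+\eta_h)\int_\O |\nablah u_h|\dv{x}$. For the jump term, Fenchel's inequality splits it into a piece dominated by the $\a$-part of $J_h(u_h)$ and a remainder that the discrete trace inequality~\eqref{eq:trace_est} controls by $C\|\a_\cS^{r'} h_\cS^{-1}\|_{L^\infty}\|\cJ_h z\|_{L^{r'}}^{r'}$, which vanishes by assumption (the $r=1$ case is handled analogously using $\|\a_\cS\|_{L^\infty}\to 0$). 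Since $u_h\to u$ in $L^1$ and $\diver \cJ_h z\to \diver z$ in $L^\infty$, the left-hand side converges to $-\int_\O u\diver z\dv{x}$; taking the supremum over $z$ yields $|Du|(\O)\le \liminf_h(\int_\O |\nablah u_h|+J_h(u_h))$. The fidelity term passes to the limit because $\|\Pi_h u_h - u_h\|_{L^1}\le c h\|\nablah u_h\|_{L^1}\to 0$ forces $\Pi_h u_h\to u$ in $L^1$, which combined with $L^2$-boundedness gives $\Pi_h u_h\wto u$ in $L^2$, and weak lower semicontinuity of the $L^2$-norm closes the argument.

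For the limsup direction I would use a diagonal construction. Approximate $u\in BV(\O)\cap L^2(\O)$ by smooth $u^\veps\in C^\infty(\overline{\O})\cap L^2(\O)$ with $u^\veps\to u$ in $L^2(\O)$ and $\int_\O |\nabla u^\veps|\dv{x}\to |Du|(\O)$ (strict convergence in $BV$, plus mollification to retain $L^2$ control). For each $\veps$, pick $u_h$ as the standard continuous Lagrange nodal interpolant of $u^\veps$, which lies in $\cS^{1,dg}(\cT_h)$ and satisfies $\jump{u_h}_h=0$ on every interior side, so the $\a$-part of $J_h$ vanishes identically. Standard interpolation estimates give $\int_\O|\nablah u_h|\dv{x}\to \int_\O |\nabla u^\veps|\dv{x}$ and $\|\Pi_h(u_h-g)\|^2\to \|u^\veps-g\|^2$. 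For the $\b$-part, the discrete trace inequality yields
\[
\int_{\cS_h}\b_\cS^s|\{u_h\}_h|^s\dv{s} \le c_\cT \|\b_\cS^s h_\cS^{-1}\|_{L^\infty}\|u_h\|_{L^s(\O)}^s,
\]
which tends to zero since the weight vanishes and $\|u_h\|_{L^s(\O)}$ stays bounded thanks to the hypothesis $s\le 2$ and the $L^2$ control of $u^\veps$. A diagonal choice $\veps=\veps(h)\to 0$ slowly then produces a recovery sequence.

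The main obstacle I expect is the liminf step for the TV contribution, specifically the balancing act in the Fenchel--Young splitting of the interelement jump term: one must absorb one piece into $J_h(u_h)\le I_h(u_h)$ (uniformly in~$h$) while leaving a residue that tends to zero under the stated scaling of $\a_\cS$, and one has to handle the $r=1$ case (where $r'=\infty$ and Fenchel collapses to an $L^\infty$--$L^1$ pairing) separately. The condition $s\le 2$ enters only in the recovery sequence, to guarantee via $L^2$-boundedness that the $\b$-penalty collapses in the limit.
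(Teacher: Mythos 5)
Your proposal is correct and follows essentially the same route as the paper: the liminf inequality via the distributional characterization of $|Du|(\O)$, the integration-by-parts formula with the Raviart--Thomas quasi-interpolant $\cJ_h z$, and vanishing of the interelement jump contributions under the stated scaling of $\a_\cS$; and the recovery sequence via (intermediate) density of continuous finite element functions, with the $\b$-term killed by the trace inequality and $s\le 2$. The only minor deviation is that you split the jump term by Fenchel--Young and absorb one piece into $J_h(u_h)$, arriving at $|Du|(\O)\le\liminf\big(\|\nablah u_h\|_{L^1(\O)}+J_h(u_h)\big)$, whereas the paper uses H\"older to show the entire jump term tends to zero and obtains $|Du|(\O)\le\liminf\|\nablah u_h\|_{L^1(\O)}$ directly; both suffice for the asserted liminf inequality.
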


\begin{proof}
(i) To show that $I(u)\le \liminf I_h(u_h)$ for a sequence
$(u_h)_{h>0}$ with $I_h(u_h) \le c$ we first note that 
\[\begin{split}
|Du_h|(\O) &= \|\nabla u_h\|_{L^1(\O)} + \|\jump{u_h}\|_{L^1(\cS_h\setminus \GN)} \\
&\le \|\nabla u_h\|_{L^1(\O)} + \|\jump{u_h}_h\|_{L^1(\cS_h\setminus \GN)} + \|h_\cS \jump{\nablah u_h}\|_{L^1(\cS_h)} \\
&\le \|\nabla u_h\|_{L^1(\O)} 
+ \|\a_\cS\|_{L^{r'}(\cS_h\setminus \GN)} \|\a_\cS^{-1} \jump{u_h}_h\|_{L^r(\cS_h\setminus \GN)} + c_\cT  \|\nablah u_h\|_{L^1(\O)}.
\end{split}\]
Since $\|\a_\cS\|_{L^{r'}(\cS_h\setminus \GN)}^{r'} 
\le \|\a_\cS^{r'} h_\cS^{-1}\|_{L^{\infty}(\cS_h\setminus \GN)} c_\cT |\O|$ 
and since 
\[
\|u_h\|_{L^1(\O)} = \|\Pi_h u_h\|_{L^1(\O)} + h \|\nablah u_h\|_{L^1(\O)}
\]
we find that $(u_h)_{h>0}$ is bounded in $BV(\O)$. We let $u\in BV(\O)$
be an appropriate accumulation point so that $u_h\to u$ in $L^1(\O)$
and $\Pi_h u_h \wto u$ in $L^2(\O)$. Using that for $\psi\in C_0^\infty(\O;\R^d)$ 
we have 
\[\begin{split}
\int_\O \Pi_h u_h \diver \psi \dv{x} =  & - \int_\O \nablah u_h \cdot \psi \dv{x}   
+ \int_{\cS_h \setminus \GN} \jump{u_h}_h \cJ_h \psi \cdot n_\cS \dv{s}  \\
& + \int_\O \nablah u_h \cdot (\psi- \cJ_h \psi) \dv{x},
\end{split}\]
where
\[
 \int_{\cS_h} \jump{u_h}_h \cJ_h \psi \dv{s} \le \|\a_\cS\|_{L^{r'}(\cS_h)}
\|\a_\cS^{-1} \jump{u_h}_h\|_{L^r(\cS_h\setminus \GN)} \|\cJ_h \psi \|_{L^\infty(\cS_h)}
\]
tends to zero owing to the conditions on $\a_\cS$. If $\|\psi\|_{L^\infty(\O)} \le 1$ then
this leads to 
\[
\int_\O u \diver \psi \dv{x} \le \liminf_{h\to 0} \|\nablah u_h\|_{L^1(\O)}
\]
and in particular to the bound
\[
|\DD u|(\O) \le \liminf_{h\to 0} \|\nablah u_h\|_{L^1(\O)}.
\]
Since $\Pi_h(u_h-g_h) \wto (u-g)$ in $L^2(\O)$ and $J_h(u_h)\ge 0$ we deduce that 
$I(u)\le \liminf_{h\to 0} I_h(u_h)$.  \\
(ii) To prove that for every $u \in BV(\O)\cap L^2(\O)$
there exists a sequence $(u_h)_{h>0}$ with $u_h \wto u$ in $L^2(\O)$
and $I(u) = \lim_{h \to 0} I_h(u_h)$ 
we use the intermediate density of continuous finite element
functions in $BV(\O)\cap L^2(\O)$ to obtain a sequence $(u_h)_{h>0}$ with
$\|\a_\cS \jump{u_h}\|_{L^r(\cS_h\setminus \GN)} =0$ and which converges
intermediately in $BV(\O)$, weakly in $L^2(\O)$, and strongly in $L^1(\O)$
to~$u$. The condition on $\b_\cS$ yields that 
$\|\b_\cS \{u_h\}\|_{L^s(\cS_h)} \to 0$ as $h\to 0$. 
Altogether, this implies the attainment result $I(u) = \lim_{h\to 0} I_h(u_h)$.
\end{proof}

\begin{remark}
For $r=1$ the condition $\|\a_\cS\|_{L^\infty(\cS_h)} \to 0$ corresponds to
the use of quadrature in the definition of the penalty terms. 
If instead of the mean of the jump $\jump{u_h}_h$ the full jump
$\jump{u_h}$ is used in the definition of
$J_h$, then if $r=1$ it suffices to require that $\a_\cS =1$ since the functional
$I_h$ then involves the exact term $|Du_h|(\O)$. Our error estimate below
shows that the condition on $\a_\cS$ can be weakened if a regularity
condition is satisfied. 
\end{remark}

For an error estimate the discrete dual functional is required. 
It consists in maximizing the functional
\[
D_h(z_h) = - I_{K_1(0)} (\Pi_{0,h} z_h)  
- \frac{1}{2\a} \|\diver z_h + \a g_h\|^2 + \frac{\a}{2} \|g_h\|^2 - K_h(z_h)
\]
in the set of vector fields $z_h\in \RT^{0,dg}(\cT_h)$. 

\begin{proposition}[Error estimate]\label{prop:tv_err_est}
Assume that $g\in L^\infty(\O)$ and that there exists a Lipschitz 
continuous solution $z\in W^2_N(\diver;\O) \cap W^{1,\infty}(\O)$ for 
the dual problem. Moreover, suppose that 
\[
\|h_\cS^{-1} \a_\cS^{r'} \|_{L^\infty(\cS_h)}
+ \|h_\cS^{-1} \b_\cS^s\|_{L^\infty(\cS_h)} \le c h,
\]
where the first term can be omitted if $r=1$ and $0<\a_\cS \le 1$. Then, for the 
solutions $u\in BV(\O)\cap L^2(\O)$ and $u_h\in \cS^{1,dg}(\cT_h)$
of the primal and discrete primal problem we have 
\[
\| u- \Pi_h u_h \|\le c h^{1/2} M_{u,z,g},
\]
with a factor $M_{u,z,g}$ that depends on $\a>0$, 
$\|u\|_{L^\infty(\O)} \le \|g\|_{L^\infty(\O)}$, 
$\|g\|_{L^2(\O)}$, and $\|\nabla z\|_{L^\infty(\O)}$. 
\end{proposition}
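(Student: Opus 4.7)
The plan is to follow the template sketched in the introduction. First, combine the strong convexity of the quadratic data term in $I_h$ with the discrete duality inequality of Theorem~\ref{thm:discr_duality}: since the remaining contributions in $I_h$ are convex and $u_h$ minimizes $I_h$, a standard convexity argument yields
\[
\frac{\a}{2}\|\Pi_h(\cI_h u - u_h)\|^2 \le I_h(\cI_h u) - I_h(u_h) \le I_h(\cI_h u) - D_h(\cJ_h z),
\]
where $\cI_h u$ is the quasi-interpolant from Section~\ref{sec:fe_spaces} (applied, if needed, after a mollification of $u$ so that it is well-defined for a BV datum), and $\cJ_h z$ is the Raviart--Thomas interpolant of the dual solution.

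Next, I would expand $I_h(\cI_h u) - D_h(\cJ_h z)$ and exploit the projection identities $\nablah \cI_h u = \Pi_h \nabla u$ and $\diver \cJ_h z = \Pi_h \diver z$. Jensen's inequality, in its interpretation as a total-variation diminishing property, gives $\int_\O |\Pi_h \nabla u|\dv{x}\le |Du|(\O)$, and an analogous Jensen step handles $\|\Pi_h \diver z + \a \Pi_h g\|$. The continuous strong duality $I(u) = D(z)$ then cancels the dominant total-variation and fidelity contributions, leaving behind only interpolation remainders and the skeleton penalties $J_h(\cI_h u)$ and $K_h(\cJ_h z)$.

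To keep the indicator term $I_{K_1(0)}(\Pi_h \cJ_h z)$ finite, admissibility of the dual interpolant must be enforced: since $|z|\le 1$ almost everywhere and $z\in W^{1,\infty}(\O;\R^d)$, rescaling $z$ by a factor $1/(1+c\|\nabla z\|_{L^\infty}h)$ guarantees $|\Pi_h \cJ_h z|\le 1$ on every element and introduces only an additional $\cO(h)$ error in the dual energy. The remaining skeleton terms are estimated by the discrete trace inequality~\eqref{eq:trace_est} together with the standing hypothesis on $\a_\cS$ and $\b_\cS$, giving
\[
J_h(\cI_h u) + K_h(\cJ_h z) \le c_\cT h \big(\|u\|_{L^\infty(\O)}^s + \|z\|_{L^\infty(\O)}^{r'}\big),
\]
where the contribution involving $\jump{\cJ_h z\cdot n_\cS}$ drops out because $\cJ_h z\in \RT^0_{\!N}(\cT_h)$.

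Assembling these pieces yields $\frac{\a}{2}\|\Pi_h(\cI_h u - u_h)\|^2 \le c h M_{u,z,g}^2$, hence $\|\Pi_h(\cI_h u - u_h)\|\le c h^{1/2} M_{u,z,g}$. The triangle inequality
\[
\|u - \Pi_h u_h\| \le \|u - \Pi_h \cI_h u\| + \|\Pi_h(\cI_h u - u_h)\|,
\]
combined with the standard $L^2$ approximation bound $\|u - \Pi_h \cI_h u\|\le c h^{1/2}$ for $u\in BV(\O)\cap L^\infty(\O)$, yields the claim. The main obstacle I expect is the careful bookkeeping of the several $\cO(h)$ error contributions -- those from rescaling $z$ to meet the pointwise unit-ball constraint, from the mismatch between $\|\Pi_h(\cI_h u - g)\|^2$ and $\|u - g\|^2$, and from interpolating a merely BV function $u$ -- so that the constants aggregate into a single factor $M_{u,z,g}$ depending only on $\a$, $\|g\|_{L^\infty}$, $\|g\|_{L^2}$, and $\|\nabla z\|_{L^\infty}$, as claimed.
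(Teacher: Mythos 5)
Your proposal is correct and follows essentially the same route as the paper's proof: coercivity plus the discrete duality inequality, a mollified quasi-interpolant of the BV solution with the Jensen/TV-diminishing property, the rescaling $\cJ_h z/(1+c\|\nabla z\|_{L^\infty}h)$ to restore admissibility for the unit-ball constraint, trace-inequality bounds for the surviving skeleton terms, and the $L^1$--$L^\infty$ interpolation argument to convert the $\cO(h)$ bound on $\|u-\Pi_h\cI_h u\|_{L^1}$ into the $\cO(h^{1/2})$ bound in $L^2$. All the essential ingredients and the bookkeeping you anticipate match the paper's five-step argument.
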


\begin{proof}
(i) By the coercivity of the discrete functional $I_h$ and the
discrete duality relation $\inf I_h \ge \sup D_h$ we have 
\[
\frac{\a}{2} \|\Pi_h(v_h-u_h)\|^2 \le I_h(v_h)- I_h(u_h) 
\le I_h(v_h) -D_h(y_h) 
\]
for every $v_h\in \cS^{1,dg}(\cT_h)$ and $y_h \in \RT^{0,dg}(\cT_h)$. \\
(ii) By noting that $u\in L^\infty(\O)$ and choosing regularizations
$(u_\veps)_\veps >0$ of $u$ 
we construct a quasi-interpolant 
$\tu_h = \lim_{\veps \to 0} \cI_h u_\veps \in \cS^{1,cr}(\cT_h)$ satisfying  
\[\begin{split}
\|\nabla_h \tu_h\|_{L^1(\O)} &\le |\DD u|(\O),  \\
\|\tu_h \|_{L^\infty(\O)} &\le  c_d \|u\|_{L^\infty(\O)},  \\
\|\tu_h  - u\|_{L^1(\O)} &\le c_{\cI,1} h |\DD u|(\O). 
\end{split}\]
In particular, we have that $\jump{\tu_h}_h = 0$ on inner element sides
$S\in \cS_h \setminus \p\O$ and hence
\[\begin{split}
I_h(\tu_h) &  = \|\nablah \tu_h\|_{L^1(\O)} 
 + \frac{\a}{2} \| \Pi_h(\tu_h - g)\|^2 + \frac{1}{s} \|\b_\cS \{\tu_h\}_h \|_{L^s(\cS_h)}^s \\
&\le I(u) + \frac{\a}{2} \big(\|\Pi_h(\tu_h - g)\|^2 - \|u - g\|^2\big)
+ \frac{1}{s} \|h_\cS^{-1} \b_\cS^s\|_{L^\infty(\cS_h)} c_\cT \|\tu_h\|_{L^s(\O)}^s.
\end{split}\]
Abbreviating $\ou_h = \Pi_h \tu_h$ and $g_h = \Pi_hg$ we have 
\[\begin{split}
\|\ou_h-g_h\|^2 &= \|\ou_h-g\|^2 - \|g-g_h\|^2 \\
& = \|u- g\|^2  + \int_\O (\ou_h-u) (\ou_h + u -2g)\dv{x} - \|g-g_h\|^2 .
\end{split}\]
Incorporating the bound $\|\cI_h u\|_{L^s(\O)} \le c_s \|u\|_{L^\infty(\O)}$,
these identities imply that 
\[\begin{split}
I_h(\tu_h)  &\le I(u) 
+ \frac{\a}{2} \|\ou_h-u\|_{L^1(\O)} \|\ou_h+u -2g \|_{L^\infty(\O)}  \\
&\qquad -  \frac{\a}{2} \|g-g_h\|^2 
+ c_\cT c_s^s \|h_\cS^{-1} \b_\cS^s\|_{L^\infty(\cS_h)} \|u\|_{L^\infty(\O)}^s. 
\end{split}\]
(iii) With $L = \|\nabla z\|_{L^\infty(\O)}$ we have 
that $\|\cJ_h z\|_{L^\infty(\O)} \le \vrho_h= 1 + c h L$. Hence, 
for $\tz_h = \vrho_h^{-1} \cJ_h z \in \RT^0_{\!N}(\cT_h)$ we have 
$|\Pi_h\tz_h(x_T)|\le 1$
as well as $\diver \tz_h = \vrho_h^{-1} \Pi_h \diver z$. With these relations
we deduce that
\[\begin{split}
-D_h(\tz_h) &=  K_h(\tz_h) + \frac{1}{2\a} \|\diver \tz_h + \a g_h\|^2 - \frac{\a}{2} \|g_h\|^2 \\
& \le  K_h(\tz_h) + \frac{1}{2\a} \| \vrho_h^{-1} \diver z + \a g\|^2 - \frac{\a}{2} \|g_h\|^2 \\
& =  K_h(\tz_h) + \frac{\vrho_h^{-2}}{2\a} \| \diver z \|^2 + \vrho_h^{-1} \int_\O \diver z  g \dv{x} 
 + \frac{\a}{2} \big(\|g\|^2-\|g_h\|^2\big) \\
& \le - D(z) + K_h(\tz_h) + \frac{\a}{2} \|g-g_h\|^2 + |\vrho_h^{-1}-1| \|\diver z\| \|g\|,
\end{split}\]
where we used Jensen's inequality, $\vrho_h\ge 1$, and $\|g_h\|^2-\|g\|^2 = \|g-g_h\|^2$.
In the case $r=1$ we note that $|z|\le 1$ implies that $|\tz_h \cdot n_\cS| \le 1$
and hence since $\a_\cS^{-1} \ge 1$ that $K_h(\tz_h)  = 0$. 
If $r>1$ we have
\[
K_h(\tz_h) =  \frac{1}{r'} \| \a_S \{\tz_h \cdot n_\cS\} \|_{L^{r'}(\cS_h)}^{r'} 
\le c_\cT \|h_\cS^{-1} \a_\cS^{r'}\|_{L^\infty(\cS_h)} \|z \|_{L^{r'}(\O)}^{r'}.
\]
(iv) We are now in position to combine the previous estimates. The
choices $v_h = \tu_h$ and $y_h = \tz_h$ lead to 
\[\begin{split}
\frac{\a}{2} \|\Pi_h(\tu_h-&u_h)  \|^2 \le I_h(\tu_h) - D_h(\tz_h) \\
&\le I(u) + \frac{\a}{2} \|\ou_h-u\|_{L^1(\O)} \|\ou_h+u -2g \|_{L^\infty(\O)}  \\
& \quad - D(z) + |\vrho_h^{-1}-1| \|\diver z\| \|g\| \\
& \quad +  c_\cT \d_r \|h_\cS^{-1} \a_\cS^{r'}\|_{L^\infty(\cS_h)} \|z \|_{L^{r'}(\O)}^{r'}
+  c_\cT \|h_\cS^{-1} \b_\cS^s\|_{L^\infty(\cS_h)} \|u\|_{L^s(\O)}^s,
\end{split}\]
where $\d_r = 0$ if $r=1$ and $\d_r =1$ otherwise. 
Using $I(u)=D(z)$, the estimate $1-\vrho_h^{-1} \le c h L$,
the approximation properties of $\tu_h$, and the conditions of the
proposition show that 
\[
\frac{\a}{2} \|\Pi_h(\tu_h-u_h)\|^2 \le  c h \widetilde{M}_{u,z,g}^2.
\]
(v) With the estimate
\[\begin{split}
\|u-\Pi_h \tu_h\|^2 &\le \|u-\Pi_h \tu_h\|_{L^\infty(\O)} \|u-\Pi_h \tu_h\|_{L^1(\O)}  \\
&\le (1+c_d)\|u\|_{L^\infty(\O)}  \big(\|u-\tu_h\|_{L^1(\O)} + \|\tu_h - \Pi_h \tu_h\|_{L^1(\O)}\big) \\
&\le (1+c_d) \|u\|_{L^\infty(\O)}  h \big( c |Du|(\O) + \|\nablah \tu_h \|_{L^1(\O)}\big),
\end{split}\]
we deduce the asserted error bound. 
\end{proof}

\begin{remarks}
(i) If $(u_h)_{h>0}$ is uniformly bounded in $L^\infty(\O)$
then using that 
\[
\|u_h -\Pi_h u_h\| \le 2 h^{1/2} \|\nablah u_h\|_{L^1(\O)} \|u_h\|_{L^\infty(\O)}
\]
we may replace $\Pi_h u_h$ by $u_h$ in the error estimate of 
Proposition~\ref{prop:tv_err_est}. \\
(ii) A reduced convergence rate is expected if $z$ fails to be Lipschitz
continuous. If only $u\in L^\infty(\O)\cap BV(\O$ is assumed then a
convergence rate $\cO(h^{1/4})$ can be established, 
cf.~\cite{BaNoSa14,ChaPoc19-pre,Bart20-pre}.
\end{remarks}

\section{Obstacle problem}\label{sec:obstacle}
A model obstacle problem is defined by the functional
\[
I(u) = \frac12 \int_\O |\nabla u|^2 \dv{x} - \int_\O f u \dv{x} + I_{\R_{\ge 0}}(u)
\]
for $u\in W^{1,p}_D(\O)$. The dual functional is given by
\[
D(z) = - \frac12 \int_\O |z|^2 \dv{x} - I_{\R_{\le 0}}(f+\diver z)
\]
for vector fields $z\in W^2_N(\diver;\O)$. We have the strong duality 
relation $I(u)=D(z)$ for solutions $u$ and $z$ and the pointwise complementarity
principle that if $u>0$ then $f+\diver z=0$. The discrete functionals are given by
\[
I_h(u_h) = \frac12 \int_\O |\nablah u_h|^2 \dv{x} - \int_\O f_h \Pi_h u_h \dv{x}
+ I_{\R_{\ge 0}}(\Pi_h u_h) + J_h(u_h),
\]
and 
\[
D_h(z_h) =
-\frac12 \int_\O |\Pi_h z_h|^2 \dv{x} - I_{\R_{\le 0}} (f_h + \diver z_h) - K_h(z_h).
\]
Owing to Theorem~\ref{thm:discr_duality} we have that $I_h(u_h) \ge D_h(z_h)$.
We assume that the functionals $J_h$ and $K_h$ are defined
with the parameters and quantities
\[
r = s = 2, \quad \a_\cS = c_\a h_\cS^\g, \quad \b_\cS = c_\b h_\cS^\g
\]
for parameters $\g\ge 3/2$, $\a>0$, and $\b\ge 0$. 

\begin{proposition}[Error estimate]
Assume that $u\in W^{1,2}_D(\O) \cap W^{2,2}(\O)$. Then we have that 
\[\begin{split}
\|\nablah (u_h- u)\|^2 & \le c_\cJ^2 h^2 \|D z\|^2 
+ 2 c_\cI^2 h^2 \|f+\diver z\| \|D^2 u\| \\
& \quad + c_\cT c_{\a,\b}^2 h^{2\g -1} 
\big(\|u\|_{W^{1,2}(\O)}^2 + \|z\|_{W^{1,2}(\O)}^2\big).
\end{split}\]
\end{proposition}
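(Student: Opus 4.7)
My plan is to follow the duality-based error analysis of Proposition~\ref{prop:error_nonlin_diri}, adapted to absorb the obstacle indicator $I_{\R_{\ge 0}}$ on the primal side and its dual image $I_{\R_{\le 0}}$ on the dual side. Since $\phi(v) = |v|^2/2$ is $\s$-coercive with $\s(a;b) = \tfrac12|a-b|^2$, minimality of $u_h$ yields
\[
\tfrac12 \|\nablah(u_h - \cI_h u)\|^2 \le I_h(\cI_h u) - I_h(u_h)
\]
provided $\cI_h u$ is admissible, that is $\Pi_h \cI_h u \ge 0$; this holds on simplices because the element mean of a Crouzeix--Raviart interpolant equals the average of its midpoint values, each a face mean of the nonnegative~$u$. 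Combined with Theorem~\ref{thm:discr_duality} applied to $z_h = \cJ_h z$---which is admissible for $D_h$ since $\diver \cJ_h z = \Pi_h \diver z = -f_h - \Pi_h \mu \le -f_h$ with $\mu := -(f + \diver z) \ge 0$ lying in $L^2$ by the $W^{2,2}$-regularity---the task reduces to bounding $I_h(\cI_h u) - D_h(\cJ_h z)$.

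I would then expand this difference using $\nablah \cI_h u = \Pi_h \nabla u$, the Fenchel identity $z = D\phi(\nabla u) = \nabla u$, self-adjointness of~$\Pi_h$, and integration-by-parts~\eqref{eq:int_parts_dg}, whose skeleton contributions vanish because $\cI_h u \in \cS^{1,cr}_D(\cT_h)$ and $\cJ_h z \in \RT^0_{\!N}(\cT_h)$. The cross-term $-\int_\O f_h \Pi_h \cI_h u \dv{x} + \int_\O \nablah \cI_h u \cdot \cJ_h z \dv{x}$ collapses to $\int_\O \Pi_h \cI_h u \cdot \Pi_h \mu \dv{x}$, producing the decomposition
\[
I_h(\cI_h u) - D_h(\cJ_h z) = \tfrac12 \|\Pi_h(z - \cJ_h z)\|^2 + \int_\O \Pi_h \cI_h u \, \Pi_h \mu \dv{x} + J_h(\cI_h u) + K_h(\cJ_h z).
\]

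Each summand is then estimated individually. The quadratic term is bounded by $\tfrac12 \|z - \cJ_h z\|^2 \le \tfrac12 c_\cJ^2 h^2 \|\nabla z\|^2$ via $L^2$-stability of $\Pi_h$ and the interpolation estimate for~$\cJ_h$. The complementarity contribution is the most delicate step: using the strong-duality identity $\int_\O u\mu \dv{x} = 0$, self-adjointness of~$\Pi_h$, and the orthogonality of $(I - \Pi_h)(\cdot)$ against piecewise constants to rewrite $\int_\O \Pi_h \cI_h u \cdot \Pi_h \mu \dv{x} = \int_\O (\Pi_h \cI_h u - u)\mu \dv{x}$ and split it into pieces controlled by $\|u - \cI_h u\|$ and $h\|\nablah(u - \cI_h u)\|$, the $W^{2,2}$-interpolation estimate then yields a bound of $2 c_\cI^2 h^2 \|f + \diver z\| \, \|D^2 u\|$. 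The skeleton terms $J_h(\cI_h u)$ and $K_h(\cJ_h z)$ are controlled by the discrete trace inequality~\eqref{eq:trace_est} together with $L^2$-stability of the interpolants; balancing a factor of $h_\cS^{2\g}$ from the weights against one factor of $h_\cS^{-1}$ from the trace inequality produces the $h^{2\g - 1}$ scaling times $\|u\|_{W^{1,2}(\O)}^2 + \|z\|_{W^{1,2}(\O)}^2$.

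Finally, I convert $\|\nablah(u_h - \cI_h u)\|^2$ to $\|\nablah(u_h - u)\|^2$ through the Pythagorean identity $\|\nablah(u_h - u)\|^2 = \|\nablah(u_h - \cI_h u)\|^2 + \|(I-\Pi_h)\nabla u\|^2$, valid because $\nablah(u_h - \cI_h u)$ is elementwise constant while $\nabla u - \Pi_h \nabla u$ is $L^2$-orthogonal to piecewise constants; the extra interpolation piece is of order $h^2\|D^2 u\|^2$ and is absorbed into the complementarity estimate. The principal obstacle I anticipate is bootstrapping the complementarity contribution from the naive $\cO(h)$ bound $\|\Pi_h\cI_h u - u\|\,\|\mu\| \le c h \|\nabla u\| \, \|\mu\|$, which one obtains directly from Cauchy--Schwarz and Poincaré, to the sharp $\cO(h^2)$ estimate claimed in the proposition; this gain requires combining $\int_\O u\mu\dv{x} = 0$ with repeated applications of the self-adjointness of $\Pi_h$ and the orthogonality of $(I-\Pi_h)$ against piecewise constants to effectively trade the $H^1$ Poincaré factor for the stronger $W^{2,2}$ interpolation rate.
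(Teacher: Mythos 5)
Your overall strategy---coercivity of $I_h$, admissibility of $\cI_h u$ and $\cJ_h z$, the discrete duality relation $I_h(u_h)\ge D_h(\cJ_h z)$, and the decomposition of $I_h(\cI_h u)-D_h(\cJ_h z)$ into a quadratic term, a complementarity term, and the skeleton terms---is the same as the paper's, and your treatment of the quadratic term, the skeleton terms, and the final passage from $\|\nablah(u_h-\cI_h u)\|$ to $\|\nablah(u_h-u)\|$ is sound. The gap sits exactly where you flag it, and the tool you propose there does not close it. Writing $\l=f+\diver z=-\mu$, the problematic piece is $(\l,\cI_h u-\Pi_h\cI_h u)$ with $\cI_h u|_T-\Pi_h\cI_h u|_T=\nablah\cI_h u|_T\cdot(x-x_T)$. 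The integrated complementarity $\int_\O u\mu\dv{x}=0$ together with the orthogonality of $I-\Pi_h$ against elementwise constants only yields
\[
|(\l,\cI_h u-\Pi_h\cI_h u)|\le c\,h\,\|\l-\Pi_h\l\|\,\|\nablah\cI_h u\|,
\]
and upgrading $\|\l-\Pi_h\l\|$ from $\cO(1)$ to $\cO(h)$ would require $\l=f+\Delta u\in W^{1,2}(\O)$, which the hypotheses do not provide ($u\in W^{2,2}(\O)$ only gives $\l\in L^2(\O)$). No rearrangement of $\Pi_h$ between the two factors changes this: with only $\l\in L^2$ and $\nabla u\in W^{1,2}$ the term is genuinely $\cO(h)$, so your argument as written delivers a rate $h$ rather than $h^2$ for this contribution.

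The missing idea is the \emph{pointwise} complementarity rather than the integrated one. The paper introduces the element contact sets $\cC_T=\{x\in T:\,u(x)=0\}$ and uses that $\l$ vanishes a.e.\ on $T\setminus\cC_T$ while $\nabla u$ vanishes a.e.\ on $\cC_T$ (the gradient of a Sobolev function vanishes a.e.\ on a level set). Hence on the support of $\l$ one may replace $\nablah\cI_h u$ by $\nablah(\cI_h u-u)$, which gives
\[
\int_T\l\,(\cI_h u-\Pi_h\cI_h u)\dv{x}
=\int_{\cC_T}\l\,(x-x_T)\cdot\nabla(\cI_h u-u)\dv{x}
\le h\,\|\l\|_{L^2(T)}\,\|\nablah(\cI_h u-u)\|_{L^2(T)},
\]
and the $W^{2,2}$ interpolation estimate then supplies the second factor of $h$. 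With this substitution in place of your orthogonality argument, the rest of your proof goes through and reproduces the stated bound.
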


\begin{proof}
We first note that the quasi-interpolants $\cI_hu$ and $\cJ_hz$
are well defined and admissible in the discrete primal and dual
problems, respectively, i.e., we have 
\[
\cI_hu(x_T) = \frac{1}{d+1} \sum_{S\subset \p\T} \int_S u(s) \dv{s}  \ge 0,
\]
for every $T\in \cT_h$ and $f_h+\diver\cJ_h z = \Pi_h(f+\diver z) \le 0$. 
The coercivity of $I_h$ and the discrete duality relation 
$I_h(u_h) \ge D_h(\cJ_hz)$ lead to 
\[
\d_h^2 = \frac12 \|\nablah (u_h-\cI_hu)\|^2
\le I_h(\cI_hu) - I_h(u_h) \le I_h(\cI_hu) - D_h(\cJ_hz).
\]
By Jensen's inequality and $\nablah \cI_hu = \Pi_h \nabla u$ 
we have $\|\nablah \cI_hu\|\le \|\nabla u\|$ and with
the strong duality relation $I(u)=D(z)$ we infer that
\[\begin{split}
\d_h^2 &\le \frac12 \|\nabla u\|^2 - (f_h,\cI_h u) + J_h(\cI_hu)
+ \frac12 \|\Pi_h \cJ_hz\|^2 + K_h(\cJ_hz) \\
&= - \frac12 \|z\|^2 + (f,u) -(f_h,\Pi_h\cI_h u_h) + J_h(\cI_hu)
+ \frac12 \|\Pi_h \cJ_hz\|^2 + K_h(\cJ_hz).
\end{split}\]
The binomial formula $a^2-b^2 = 2 b(a-b) + (a-b)^2$ and
the identities $f_h = \Pi_h f$ and $z=\nabla u$ lead to the estimate 
\[\begin{split}
\d_h^2 & \le (z,\Pi_h \cJ_h z -z) + \frac12 \|\Pi_h \cJ_h z -z\|^2
+ (f,u - \Pi_h\cI_h u_h) + J_h(\cI_hu) + K_h(\cJ_hz) \\
&= (\nabla u,\Pi_h \cJ_h z -z) + \frac12 \|\Pi_h \cJ_h z -z\|^2
+ (f,u - \Pi_h\cI_h u) + J_h(\cI_hu) + K_h(\cJ_hz).
\end{split}\]
With the relation $\Pi_h \nabla u = \nablah \cI_h u$, an integration
by parts, and $\diver \cJ_h z = \Pi_h \diver z$ we obtain the identities
\[
(\nabla u,\Pi_h \cJ_h z -z)
= (\nablah \cI_h u,\cJ_h z) - (\nabla u,z) 
= (\diver z,u-\Pi_h \cI_h u).
\]
Using this and the abbreviation $\l = f+\diver z$ show that we have
\[\begin{split}
\d_h^2 & \le \frac12 \|\Pi_h \cJ_h z -z\|^2
+ (f + \diver z, u-\Pi_h \cI_h u) + J_h(\cI_hu) + K_h(\cJ_hz) \\
&= \frac12 \|\Pi_h \cJ_h z -z\|^2
+ (\l, u-\cI_h u) + (\l,\cI_h u - \Pi_h \cI_h u) + J_h(\cI_hu) + K_h(\cJ_hz)
\end{split}\]
We note that $\cI_h u|_T - \Pi_h \cI_h u(x_T) = \nablah \cI_h u|_T \cdot (x-x_T)$
and that on the element contact set 
\[
\cC_T = \{x\in T: u(x) = 0\}
\]
we have $\nabla u|_{\cC_T} = 0$ and $\l|_{T\setminus \cC_T} = 0$. Hence, 
it follows that
\[
\int_T \l (\cI_h u - \Pi_h \cI_h u)\dv{x}
= \int_{\cC_T} \l \, (x-x_T)\cdot \nabla (\cI_h u - u) \dv{x} 
\]
for every $T\in \cT_h$. We thus obtain the estimate
\[\begin{split}
\d_h^2 & \le \frac12 \|\Pi_h \cJ_h z -z\|^2
+ \|\l\| \big(\|u-\cI_h u\| + h \|\nablah (u-\cI_h u)\|\big) \\
& \qquad +  J_h(\cI_hu) + K_h(\cJ_hz).
\end{split}\]
For the side functionals $J_h$ and $K_h$ we have, owing to the
continuity properties of $\cI_h u$ and $\cJ_h z$ that
\[\begin{split}
J_h(\cI_h u) + K_h(\cJ_hz) 
&= \frac{c_\a^2}{2} \|h_\cS^\g \{\cI_h u\}_h\|_{L^2(\cS_h\setminus \GD)}^2
+ \frac{c_\b^2}{2} \|h_\cS^\g \{\cJ_h z\cdot n_\cS\}\|_{L^2(\cS_h \setminus \GN)}^2 \\
&\le \frac12 c_\cT c_{\a,\b}^2 h^{2\g -1} \big(\|\cI_h u \|^2 + \|\cJ_h z\|^2 \big).
\end{split}\]
By combining the previous estimates we arrive at
\[\begin{split}
\d_h^2 & \le \frac12 \|\Pi_h \cJ_h z -z\|^2
+ \|\l\| \big(\|u-\cI_h u\| + h \|\nablah (u-\cI_h u)\|\big) \\
& \qquad + \frac12 c_\cT h^{2\g -1} c_{\a,\b}^2\big( \|\cI_h u \|^2 + \|\cJ_h z\|^2 \big).
\end{split}\]
With basic stability properties of the quasi-interpolation operators
as operators from $W^{1,2}(\O;\R^\ell)\to L^2(\O;\R^\ell)$
we deduce the asserted error bound.
\end{proof}

\begin{remark}
By defining discontinuous Galerkin methods with certain consistency
properties it is possible to derive optimal convergence rates 
with a penalty term that only involves the factor $h^{-1}$, cf.~\cite{WaHaCh10}.
The approach followed here applies to a large class of variational
problems and allows for a simple error analysis. 
\end{remark}

\section{Numerical experiments}\label{sec:num_ex}
We verify in this section the theoretical results and discuss the 
role of the parameters involved in the discontinuous Galerkin
discretizations. 

\subsection{Poisson problem}
To verify the optimality of the conditions on the weight 
function $\a_\cS$ in the error estimates we consider a Poisson 
problem. The discretized functional reads
\[
I_h(u_h) = \frac12 \int_\O |\nablah u_h|^2 \dv{x}
- \int_\O f_h u_h \dv{x} 
+ \frac{c_\a^{-2}}{2} \int_\cS h_\cS^{-2\g} |\jump{u_h}|^2 \dv{s},
\]
subject to homogeneous Dirichlet boundary conditions 
for $u_h$ on $\GD = \p\O$. Our parameters correspond to the
settings
\[
\a_\cS = c_\a h_\cS^\g, \quad \b_\cS = 0, \quad r = 2, \quad s = 2,
\]
where we consider combinations of the parameters 
\[
\g \in \{0.5,1.0,1.5,2.0\}, \quad c_\a^{-1} \in \{1.0, 4.0\}.
\]

\begin{example}\label{ex:poisson}
Let $d=2$, $\O=(-1,1)^2$, $\GD = \p\O$,
and for $x=(x_1,x_2)\in \O$ set
\[
f(x_1,x_2) = 2 \pi^2 \sin(\pi x_1) \sin(\pi x_2).
\]
Then, the exact solution is given by 
\[
u(x_1,x_2) = \sin(\pi x_1) \sin(\pi x_2)
\]
and satisfies $u\in W^{1,2}_D(\O)\cap W^{2,2}(\O)$. 
\end{example}

The plots in Figure~\ref{fig:poisson_eoc} show the 
experimental errors
\[
\|\nablah e_h\| = \|\nablah (u-u_h)\|
\]
versus the number of elements $N= \# \cT_h \sim h^{-2}$
for different combinations of parameters $\g$ and
$c_\a$. We obsere that the choices $\g=1/2$ and $\g=1$ 
do in not lead to an experimental optimal convergence
rate. The choice $\g=3/2$ leads to linear convergence
independently of the choice of the constant factor $c_\a$
which is in agreement with the theoretical error estimates. 

\begin{figure}[h!]
\includegraphics[width=9.2cm]{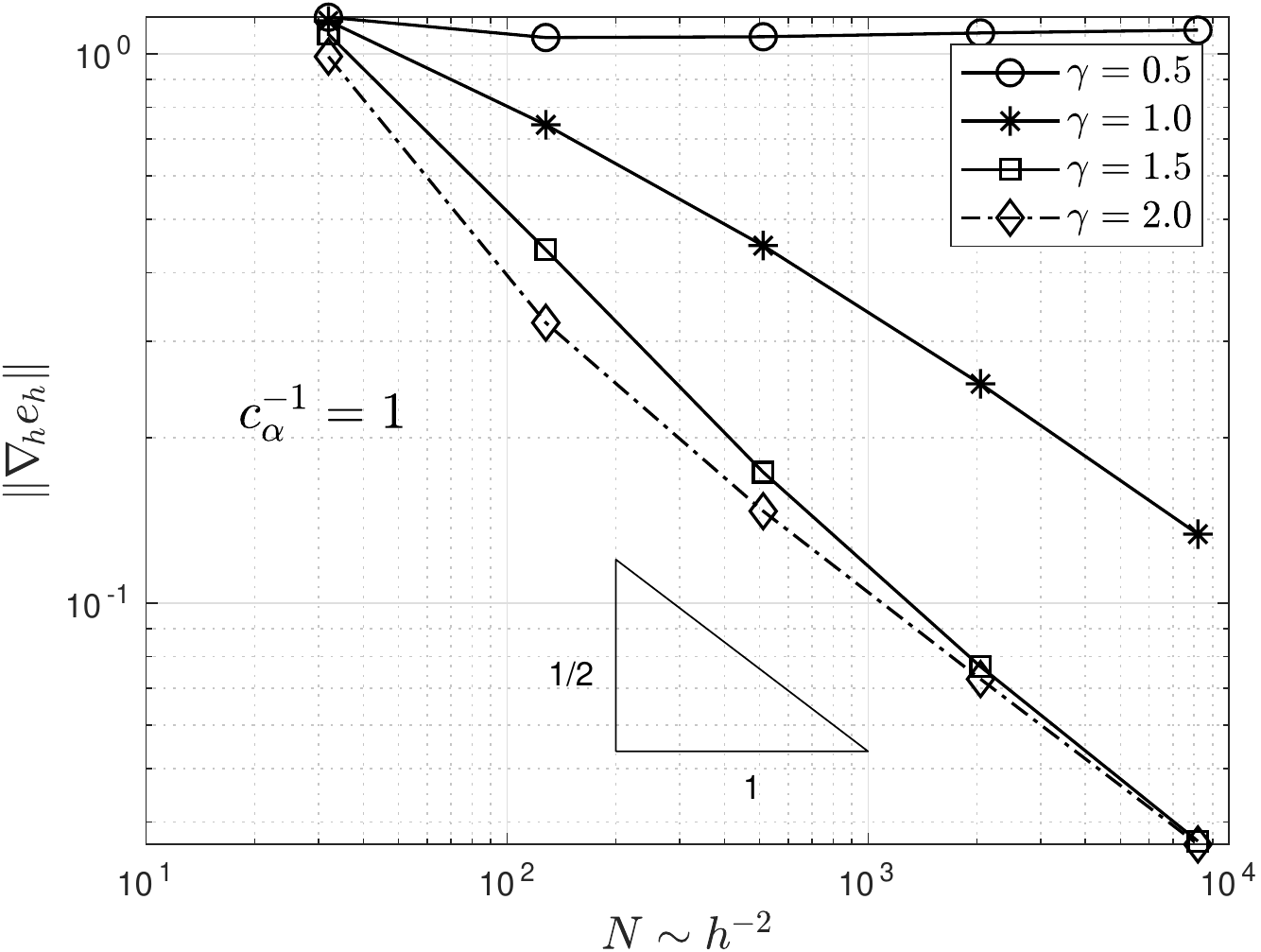} \\[5mm]
\includegraphics[width=9.2cm]{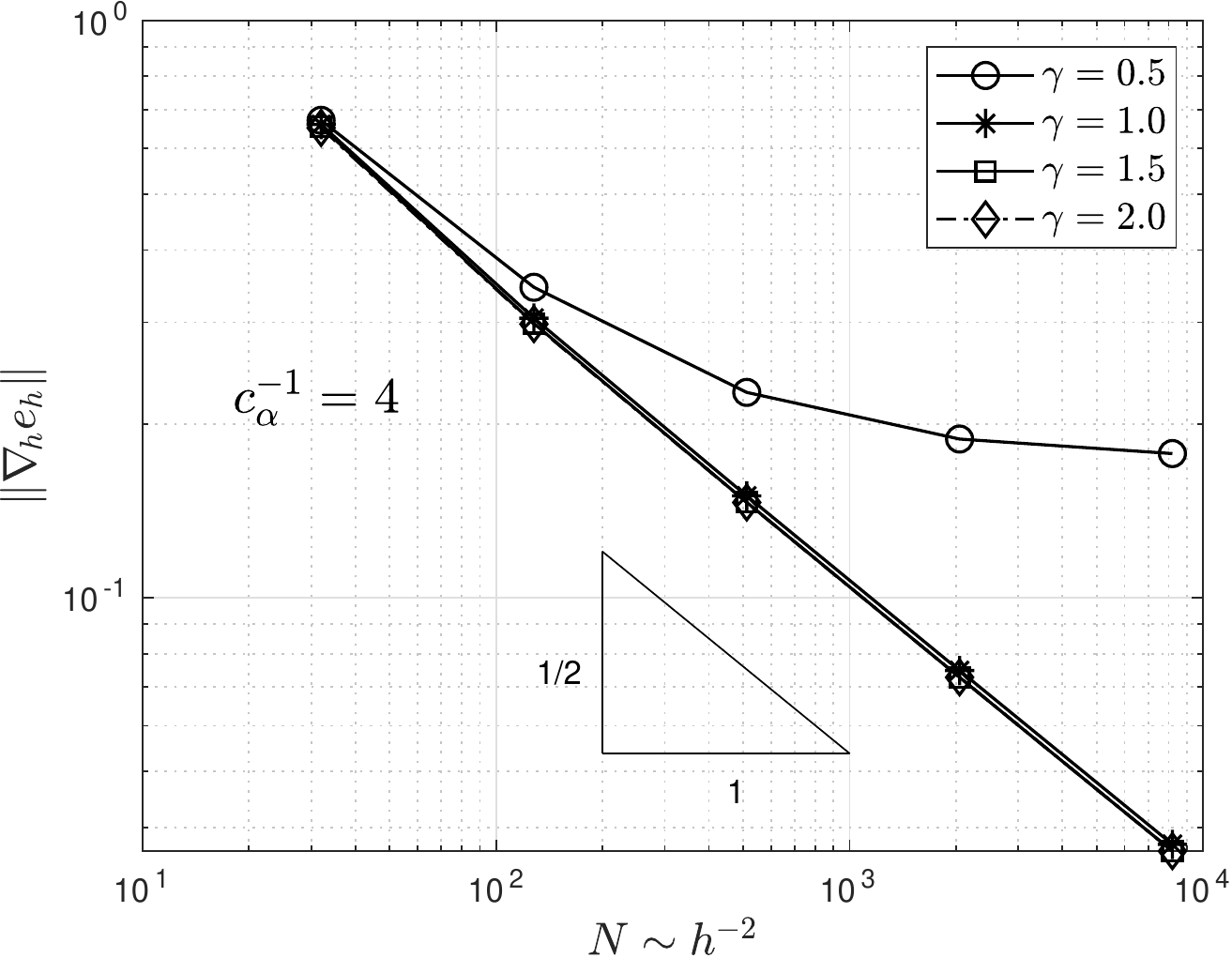}
\caption{\label{fig:poisson_eoc} Experimental convergence rates
in the approximation of the Poisson problem defined in 
Example~\ref{ex:poisson} for different penalty functionals.}
\end{figure}

\subsection{Total-variation minimization}
For a given triangulation we consider the discrete minimization
problem defined via the functional
\[
I_{h,\veps}(u_h) = \int_\O |\nablah u_h|_\veps \dv{x}
+ \frac{\a}{2} \|\Pi_h u_h - g_h\|^2
+ \frac{c_\a^{-r}}{r} \int_\cS h_\cS^{-\g r} |\jump{u_h}|_\veps^r \dv{s}
\]
with the regularized modulus or length $|a|_\veps = (|a|^2 +\veps^2)^{1/2}$
for $a\in \R^\ell$ and $\veps>0$.
Since $0\le |a|_\veps- |a| \le \veps$ the error estimate
of Proposition~\ref{prop:tv_err_est} remains valid provided that
$\veps \le c h$, we therefore choose $\veps = h$. 
The definition correponds to the settings
\[
\a_\cS = c_\a h_\cS^\g, \quad \b_\cS = 0.
\]
In the following
example we consider Dirichlet boundary conditions on $\GD = \p\O$.
While a general existence theory is lacking our error analysis
remains valid provided a solution exists, which is the 
case for the considered setting.

\begin{example}\label{ex:tv}
For $\O\subset \R^d$, $\a>0$, and $R>0$ such that 
$\overline{B_R(0)}\subset \O$, let
\[
g(x) = \chi_{B_R(0)}(x).
\]
Then $u(x) = \max\{1-2/(\a R),0\} \chi_{B_R(x)}$ is the unique solution
of the total variation minimization problem subject to
homogeneous Dirichlet conditions on $\GD =\p\O$.
The solution $z\in W^2(\diver;\O)$ of the dual problem is 
given by
\[
z(x) = 
\begin{cases}
R^{-1} x & \mbox{for }|x|\le R, \\
-R x/|x|^2 & \mbox{for } |x|\ge R,
\end{cases}
\]
and satisfies $z\in W^{1,\infty}(\O;\R^d)$. We set $\a=10$,
$R=1/2$, and $\O = (-1,1)^2$. 
\end{example}

Our numerical approximations are obtained with a
semi-implicit discretization of an $L^2$ gradient flow 
for $I_{h,\veps}$ with step-size $\tau=1$ and $L^2$ 
stopping criterion $\veps_{\rm stop} = h/100$. We refer
the reader to~\cite{ChaPoc11,Bart15-book} for discussions
of iterative methods. 
The top and bottom plots in Figure~\ref{fig:sols_tv} show 
numerical solutions for the parameters
\[
\text{(t)}\quad r = 1,\ \g = 1, \ c_\a = 1, \qquad
\text{(b)}\quad r = 2,\ \g = 1, \ c_\a = 1,
\] 
on the triangulations $\cT_\ell$ with $\ell=4$ 
consisting of $2^\ell$ halved squares.  We observe that 
the choice $r=2$ leads to an artificially rounded region,
according to the error analysis of Proposition~\ref{prop:tv_err_est}
they are of comparable accuracy. The analysis showed
that the error bound is independent of the $\g$ and
$c_\a$ if $r=1$. This is confirmed by the 
experimental convergence rates shown in Figure~\ref{fig:eoc_tv},
where the error quantity
\[
\|e_h\|^2 = \|\Pi_h (u-u_h)\|^2
\]
is plotted against the number of elements in $\cT_\ell$ 
for combinations of the parameters $r\in \{1,2\}$ and 
$\g \in \{0, 1, 2\}$ and $c_\a^{-1} = 10$. 
We observe the expected rate $h^{1/2}$ for all combinations
except when $\g=0$. In the case $\g=0$ we only observe an error
decay if $r=1$ which confirms the theoretical results but does not 
lead to the expected optimal convergence 
rate. Further experiments indicated that this is
related to the use of regularization
and the approximate iterative solution of the nonlinear systems.

\begin{figure}[h!]
\includegraphics[width=11cm]{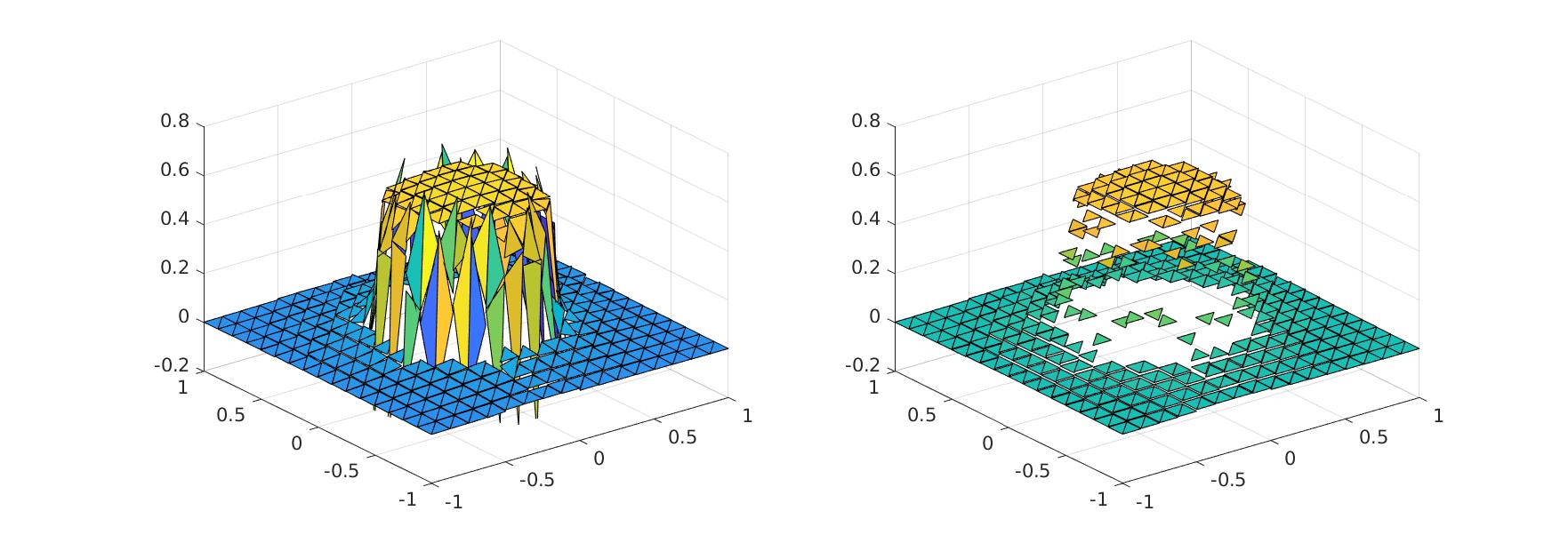}
\includegraphics[width=11cm]{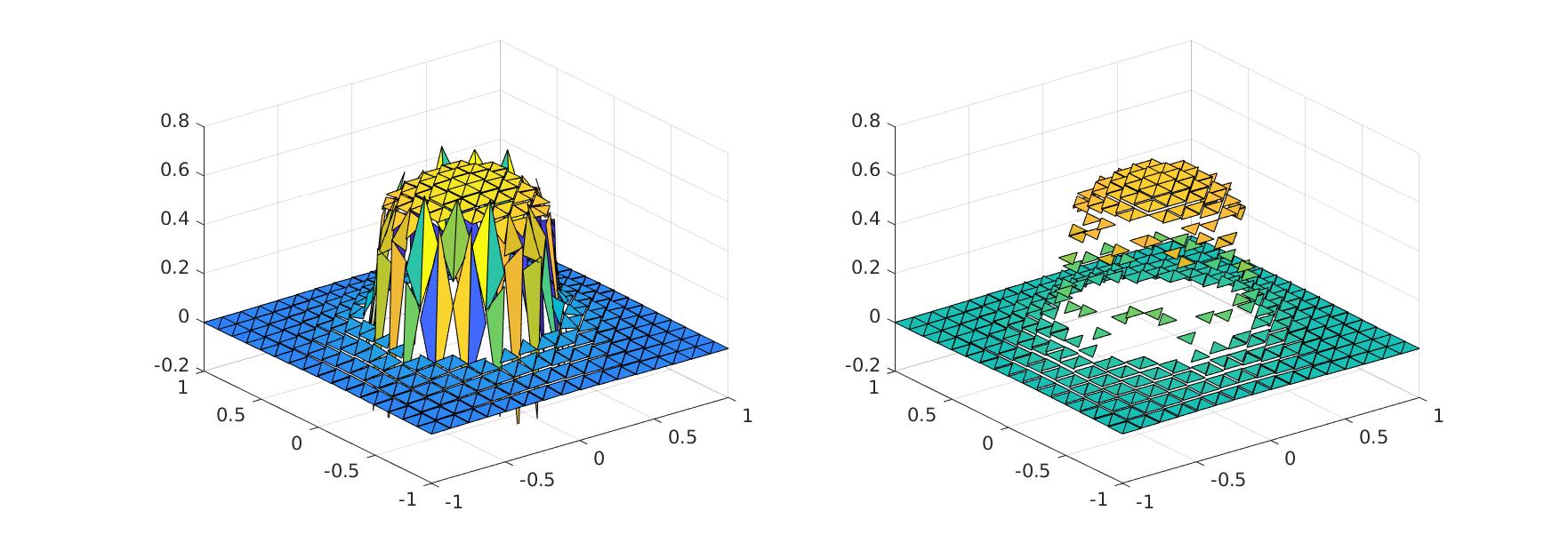}
\caption{\label{fig:sols_tv} Approximations $u_h$ and projections $\Pi_h u_h$
for the total variation minimization problem defined in 
Example~\ref{ex:tv} for linear (top) and quadratic (bottom) 
penalty terms.}
\end{figure}

\begin{figure}[h!]
\includegraphics[width=9.2cm]{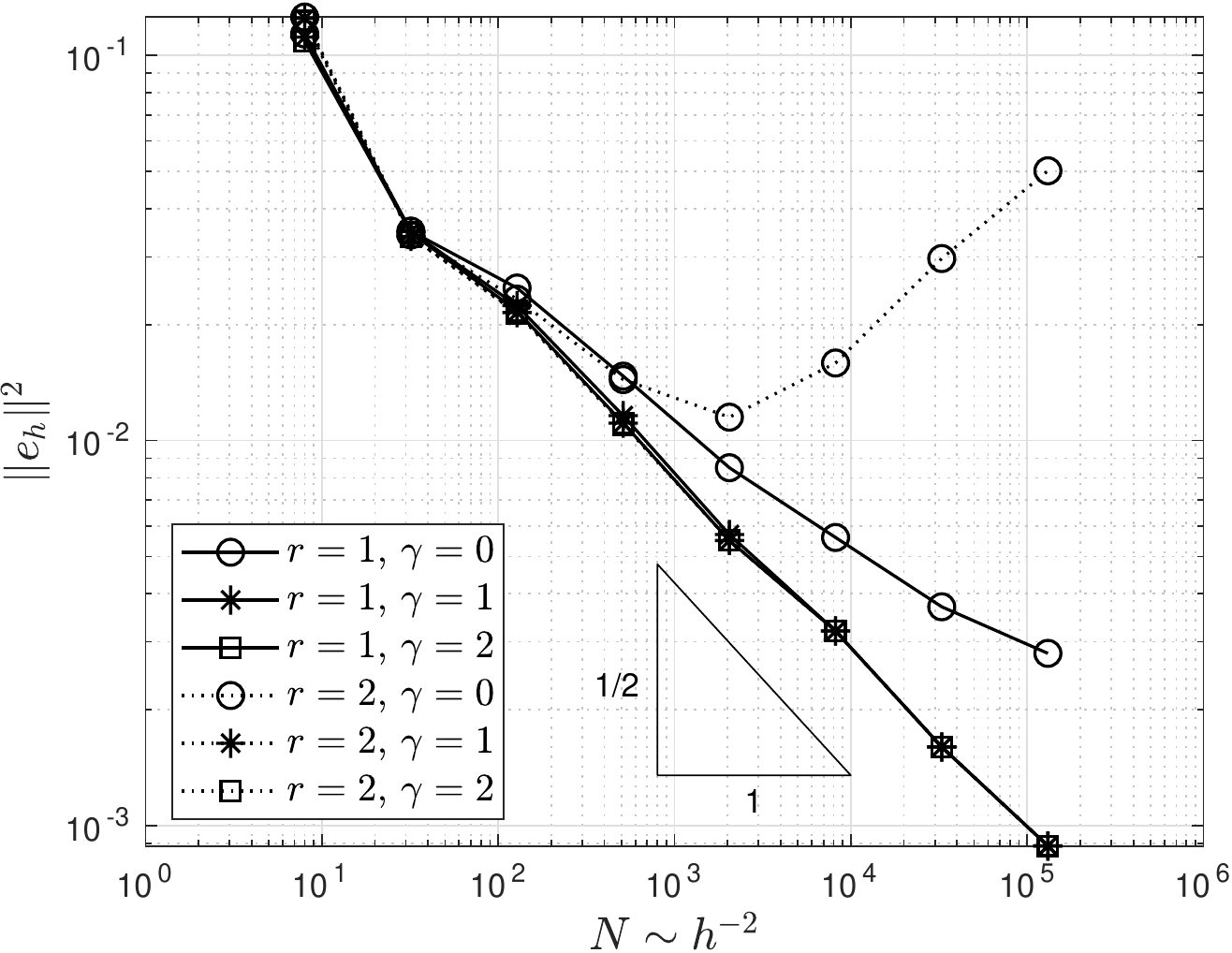}
\caption{\label{fig:eoc_tv} Experimental convergence rates
in the approximation of the total variation minimization problem
defined in Example~\ref{ex:tv} for different penalty terms.}
\end{figure}

\subsection{Obstacle problem}
We consider an obstacle problem that includes inhomogeneous
Dirichlet boundary conditions via a decomposition of the
solution and thus leads to the discrete functional
\[\begin{split}
I_h(u_h) &= \frac12 \int_\O |\nablah u_h|^2 \dv{x}
- \int_\O f_h u_h \dv{x} + I_{\tchi_h}(\Pi_h u_h) \\
&\qquad  + \int_\O \nablah \cI_h \tu_D \cdot \nablah u_h \dv{x}
+ \frac{c_\a^{-2}}{2} \int_\cS h_\cS^{-2\g} |\jump{u_h}|^2 \dv{s},
\end{split}\]
with the transformed obstacle $\tchi_h = \Pi_h(\chi - \cI_h \tu_D)$
and subject to homogeneous Dirichlet boundary conditions 
for $u_h$ on $\GD = \p\O$. The approximate solution is
thus $u_h + u_{D,h}$. Our parameters correspond to the
settings
\[
\a_\cS = c_\a h^{-\g}, \quad \b_\cS = 0, \quad r = 2, \quad s = 2.
\]
We specify the data in following example. 

\begin{example}[\cite{LiLiTa00}]\label{ex:obst}
Let $\O=(-3/2,3/2)^2$, $f=-2$, $\chi = 0$, and 
$u_D(x) = |x|^2/2 - \log(|x|) - 1/2$ for $x\in \GD = \p\O$. 
Then, the exact solution is given by 
\[
u(x) =
\begin{cases} 
|x|^2/2 - \log(|x|) - 1/2 & \mbox{for } |x|\ge 1, \\
0 & \mbox{for } |x|\le 1.
\end{cases}
\]
and satisfies $u\in W^{1,2}_D(\O)\cap W^{2,2}(\O)$. 
\end{example}

We solved the discrete minimization problem with a semismooth
Newton iteration as in~\cite{HinKun02} that converged 
superlinearly towards the
stopping criterion that required a correction in the discrete
$H^1$ norm less than $\veps_{\rm stop} = h$. 
The left and right plots of Figure~\ref{fig:obst_red_4} show 
the discontinuous Galerkin
approximations for the penalty functionals defined via
\[
\text{($\ell$)}\quad r = 2, \g = 1, \ c_\a = 1, \qquad
\text{(r)}\quad r = 2, \g = 3/2, \ c_\a = 1.
\] 
We observe that the jumps along inner edges are smaller
for the larger exponent $\g$. The factor $c_\a$ strongly
influences the preasymptotic range of the convergence
rate which can be observed from Figure~\ref{fig:obst_eoc}
where we plotted the approximation errors 
\[
\|\nablah e_h\| = \|\nablah (u_h- \cI_hu)\|
\]
versus the number of elements with a logarithmic scaling
on both axes. We obtain the expected linear rate of 
convergence for $\g \ge 3/2$. The decay of the error
for $\g=3/2$ is different when $c_\a^{-1}=1$ instead of
$c_\a^{-1}=4$. 

\begin{figure}[h!]
\includegraphics[width=6cm]{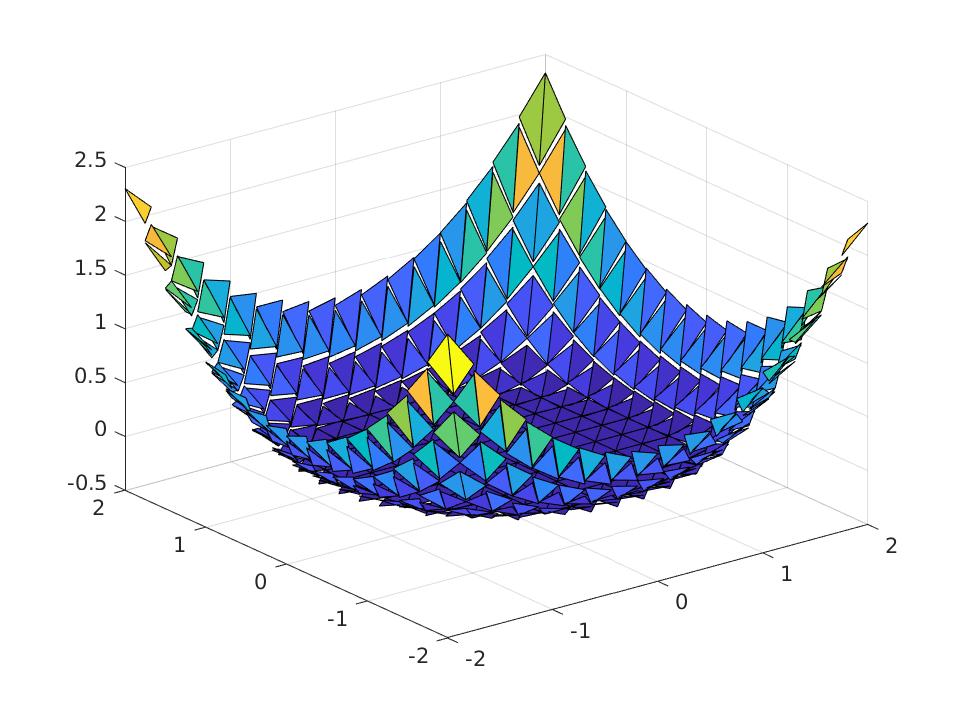}
\includegraphics[width=6cm]{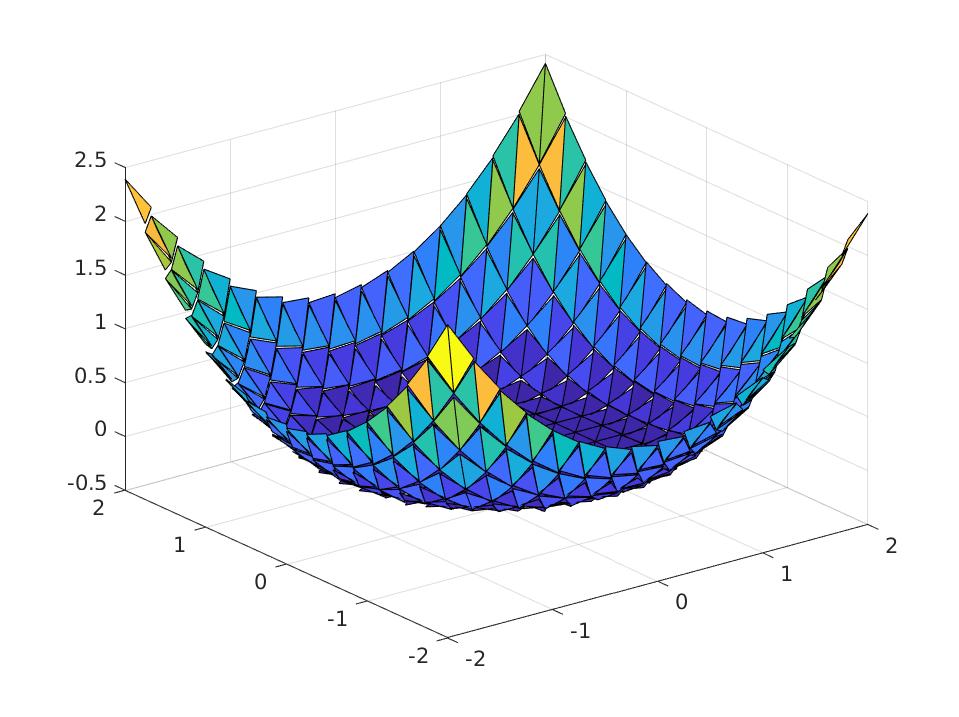}
\caption{\label{fig:obst_red_4} Discontinuous Galerkin solution
for the obstacle problem defined in Example~\ref{ex:obst}
with different penalty functionals.}
\end{figure}

\begin{figure}[h!]
\includegraphics[width=9.2cm]{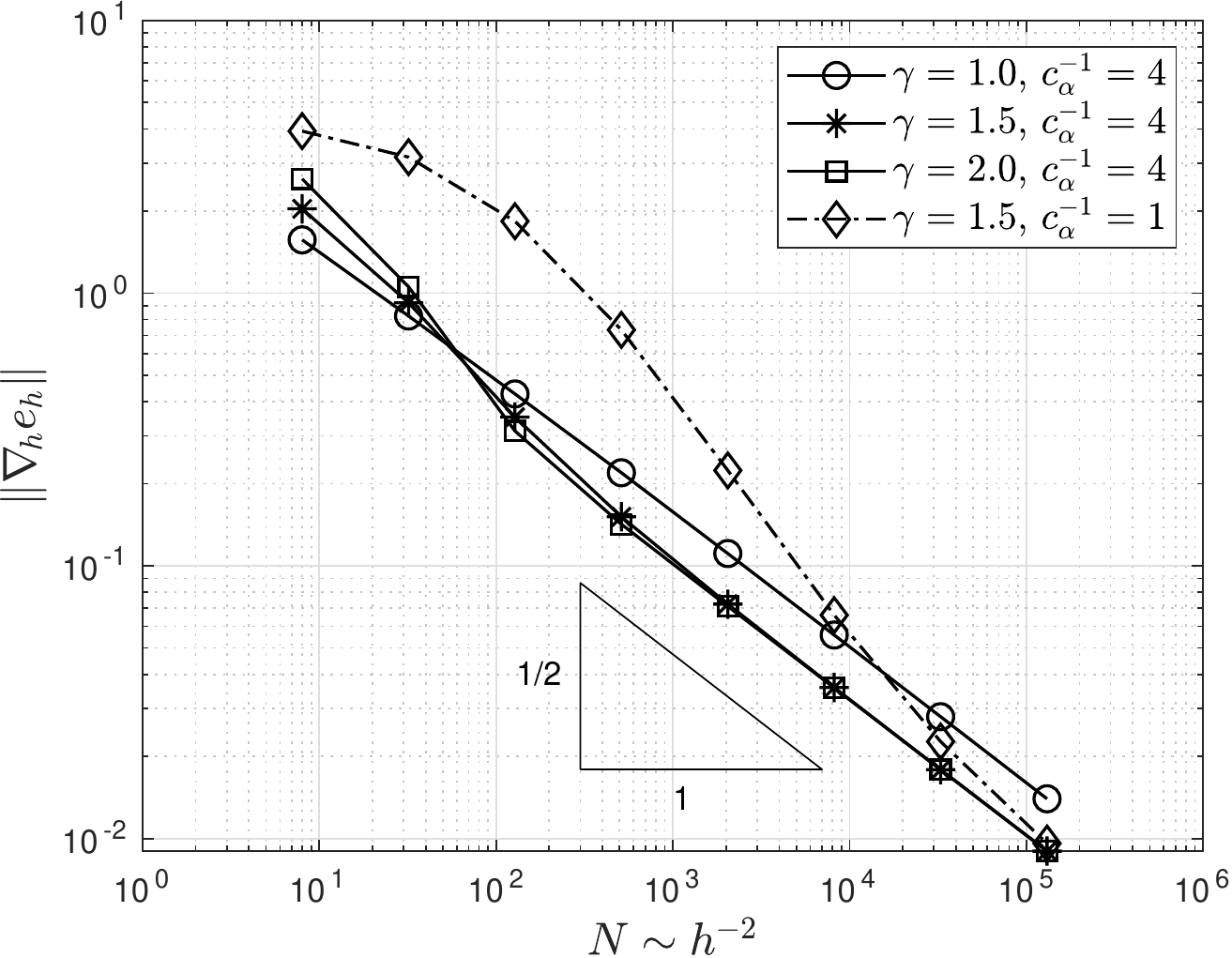}
\caption{\label{fig:obst_eoc} Experimental convergence rates
in the approximation of the obstacle problem defined in 
Example~\ref{ex:obst} for different penalty functionals.}
\end{figure}

\section*{References}
\printbibliography[heading=none]

@misc{Bart20-pre,
    title={Nonconforming discretizations of convex minimization problems and precise relations to mixed methods},
    author={Sören Bartels},
    year={2020},
    eprint={2002.02359},
    archivePrefix={arXiv},
    primaryClass={math.NA}
}

@unpublished{ChaPoc19-pre,
  TITLE = {{Crouzeix-Raviart approximation of the total variation on simplicial meshes}},
  AUTHOR = {Chambolle, Antonin and Pock, Thomas},
  URL = {https://hal.archives-ouvertes.fr/hal-01787012},
  NOTE = {HAL preprint nr. 01787012},
  YEAR = {2019},
  MONTH = Jun,
  PDF = {https://hal.archives-ouvertes.fr/hal-01787012/file/TVCRshort2.pdf},
  HAL_ID = {hal-01787012},
  HAL_VERSION = {v2},
}

@unpublished{CaiCha20-pre,
  TITLE = {{Error estimates for finite differences approximations of the total variation}},
  AUTHOR = {Caillaud, Corentin and Chambolle, Antonin},
  URL = {https://hal.archives-ouvertes.fr/hal-02539136},
  NOTE = {HAL preprint nr. 02539136},
  YEAR = {2020},
  MONTH = Apr,
  PDF = {https://hal.archives-ouvertes.fr/hal-02539136/file/CC-TV.pdf},
  HAL_ID = {hal-02539136},
  HAL_VERSION = {v1},
}

@article {Bart12,
    AUTHOR = {Bartels, S\"{o}ren},
     TITLE = {Total variation minimization with finite elements: convergence
              and iterative solution},
   JOURNAL = {SIAM J. Numer. Anal.},
  FJOURNAL = {SIAM Journal on Numerical Analysis},
    VOLUME = {50},
      YEAR = {2012},
    NUMBER = {3},
     PAGES = {1162--1180},
      ISSN = {0036-1429},
   MRCLASS = {65K10 (35A15 68U10 94A08)},
  MRNUMBER = {2970738},
MRREVIEWER = {H. P. Dikshit},
       DOI = {10.1137/11083277X},
       URL = {https://doi.org/10.1137/11083277X},
}

@article {BaNoSa15,
    AUTHOR = {Bartels, S\"{o}ren and Nochetto, Ricardo H. and Salgado, Abner J.},
     TITLE = {A total variation diminishing interpolation operator and
              applications},
   JOURNAL = {Math. Comp.},
  FJOURNAL = {Mathematics of Computation},
    VOLUME = {84},
      YEAR = {2015},
    NUMBER = {296},
     PAGES = {2569--2587},
      ISSN = {0025-5718},
   MRCLASS = {65N30 (41A05 49J40 49M25 65D05 65N15)},
  MRNUMBER = {3378839},
MRREVIEWER = {Christian Wieners},
       DOI = {10.1090/mcom/2942},
       URL = {https://doi.org/10.1090/mcom/2942},
}

@book {Bart15-book,
    AUTHOR = {Bartels, S\"{o}ren},
     TITLE = {Numerical methods for nonlinear partial differential
              equations},
    SERIES = {Springer Series in Computational Mathematics},
    VOLUME = {47},
 PUBLISHER = {Springer, Cham},
      YEAR = {2015},
     PAGES = {x+393},
      ISBN = {978-3-319-13796-4; 978-3-319-13797-1},
   MRCLASS = {65-01 (35A15 35A35 65Mxx 65Nxx)},
  MRNUMBER = {3309171},
MRREVIEWER = {Karsten Urban},
       DOI = {10.1007/978-3-319-13797-1},
       URL = {https://doi.org/10.1007/978-3-319-13797-1},
}

@book {Bart16-book,
    AUTHOR = {Bartels, S\"{o}ren},
     TITLE = {Numerical approximation of partial differential equations},
    SERIES = {Texts in Applied Mathematics},
    VOLUME = {64},
 PUBLISHER = {Springer, [Cham]},
      YEAR = {2016},
     PAGES = {xv+535},
      ISBN = {978-3-319-32353-4; 978-3-319-32354-1},
   MRCLASS = {65-01 (65M60 65N30)},
  MRNUMBER = {3496530},
       DOI = {10.1007/978-3-319-32354-1},
       URL = {https://doi.org/10.1007/978-3-319-32354-1},
}

@book {BoBrFo13-book,
    AUTHOR = {Boffi, Daniele and Brezzi, Franco and Fortin, Michel},
     TITLE = {Mixed finite element methods and applications},
    SERIES = {Springer Series in Computational Mathematics},
    VOLUME = {44},
 PUBLISHER = {Springer, Heidelberg},
      YEAR = {2013},
     PAGES = {xiv+685},
      ISBN = {978-3-642-36518-8; 978-3-642-36519-5},
   MRCLASS = {65-02 (65M60 65N30)},
  MRNUMBER = {3097958},
MRREVIEWER = {Beny Neta},
       DOI = {10.1007/978-3-642-36519-5},
       URL = {https://doi.org/10.1007/978-3-642-36519-5},
}

@book {BreSco08-book,
    AUTHOR = {Brenner, Susanne C. and Scott, L. Ridgway},
     TITLE = {The mathematical theory of finite element methods},
    SERIES = {Texts in Applied Mathematics},
    VOLUME = {15},
   EDITION = {Third},
 PUBLISHER = {Springer, New York},
      YEAR = {2008},
     PAGES = {xviii+397},
      ISBN = {978-0-387-75933-3},
   MRCLASS = {65-01 (65-02)},
  MRNUMBER = {2373954},
       DOI = {10.1007/978-0-387-75934-0},
       URL = {https://doi.org/10.1007/978-0-387-75934-0},
}

@book {Ciar78-book,
    AUTHOR = {Ciarlet, Philippe G.},
     TITLE = {The finite element method for elliptic problems},
      NOTE = {Studies in Mathematics and its Applications, Vol. 4},
 PUBLISHER = {North-Holland Publishing Co., Amsterdam-New York-Oxford},
      YEAR = {1978},
     PAGES = {xix+530},
      ISBN = {0-444-85028-7},
   MRCLASS = {65N30},
  MRNUMBER = {0520174},
MRREVIEWER = {Josef Nedoma},
}

@article {CroRav73,
    AUTHOR = {Crouzeix, M. and Raviart, P.-A.},
     TITLE = {Conforming and nonconforming finite element methods for
              solving the stationary {S}tokes equations. {I}},
   JOURNAL = {Rev. Fran\c{c}aise Automat. Informat. Recherche Op\'{e}rationnelle
              S\'{e}r. Rouge},
  FJOURNAL = {Revue Fran\c{c}aise d'Automatique, Informatique et Recherche
              Op\'{e}rationnelle S\'{e}rie Rouge},
    VOLUME = {7},
      YEAR = {1973},
    NUMBER = {no. , no. {\rm R}-3},
     PAGES = {33--75},
   MRCLASS = {65N35},
  MRNUMBER = {343661},
MRREVIEWER = {F. M. Pascal},
}

@inproceedings {RavTho77,
    AUTHOR = {Raviart, P.-A. and Thomas, J. M.},
     TITLE = {A mixed finite element method for 2nd order elliptic problems},
 BOOKTITLE = {Mathematical aspects of finite element methods ({P}roc.
              {C}onf., {C}onsiglio {N}az. delle {R}icerche ({C}.{N}.{R}.),
              {R}ome, 1975)},
     PAGES = {292--315. Lecture Notes in Math., Vol. 606},
      YEAR = {1977},
   MRCLASS = {65N30},
  MRNUMBER = {0483555},
MRREVIEWER = {Jan Bochenek},
}

@article {HinKun04,
    AUTHOR = {Hinterm\"{u}ller, M. and Kunisch, K.},
     TITLE = {Total bounded variation regularization as a bilaterally
              constrained optimization problem},
   JOURNAL = {SIAM J. Appl. Math.},
  FJOURNAL = {SIAM Journal on Applied Mathematics},
    VOLUME = {64},
      YEAR = {2004},
    NUMBER = {4},
     PAGES = {1311--1333},
      ISSN = {0036-1399},
   MRCLASS = {49M29 (49K10 49K30 65K10 94A08)},
  MRNUMBER = {2068672},
MRREVIEWER = {A. Bultheel},
       DOI = {10.1137/S0036139903422784},
       URL = {https://doi.org/10.1137/S0036139903422784},
}

@article {Mari85,
    AUTHOR = {Marini, Luisa Donatella},
     TITLE = {An inexpensive method for the evaluation of the solution of
              the lowest order {R}aviart-{T}homas mixed method},
   JOURNAL = {SIAM J. Numer. Anal.},
  FJOURNAL = {SIAM Journal on Numerical Analysis},
    VOLUME = {22},
      YEAR = {1985},
    NUMBER = {3},
     PAGES = {493--496},
      ISSN = {0036-1429},
   MRCLASS = {65N30},
  MRNUMBER = {787572},
MRREVIEWER = {Hans-Peter Helfrich},
       DOI = {10.1137/0722029},
       URL = {https://doi.org/10.1137/0722029},
}

@book {Rock70-book,
    AUTHOR = {Rockafellar, R. Tyrrell},
     TITLE = {Convex analysis},
    SERIES = {Princeton Mathematical Series, No. 28},
 PUBLISHER = {Princeton University Press, Princeton, N.J.},
      YEAR = {1970},
     PAGES = {xviii+451},
   MRCLASS = {26.52 (46.00)},
  MRNUMBER = {0274683},
MRREVIEWER = {Ky Fan},
}

@book {AtBuMi06-book,
    AUTHOR = {Attouch, Hedy and Buttazzo, Giuseppe and Michaille, G\'{e}rard},
     TITLE = {Variational analysis in {S}obolev and {BV} spaces},
    SERIES = {MPS/SIAM Series on Optimization},
    VOLUME = {6},
      NOTE = {Applications to PDEs and optimization},
 PUBLISHER = {Society for Industrial and Applied Mathematics (SIAM),
              Philadelphia, PA; Mathematical Programming Society (MPS),
              Philadelphia, PA},
      YEAR = {2006},
     PAGES = {xii+634},
      ISBN = {0-89871-600-4},
   MRCLASS = {49-02 (46E35 49J45 49J53 49K20 74G65)},
  MRNUMBER = {2192832},
MRREVIEWER = {Pablo Pedregal},
}

@book {AmFuPa00-book,
    AUTHOR = {Ambrosio, Luigi and Fusco, Nicola and Pallara, Diego},
     TITLE = {Functions of bounded variation and free discontinuity
              problems},
    SERIES = {Oxford Mathematical Monographs},
 PUBLISHER = {The Clarendon Press, Oxford University Press, New York},
      YEAR = {2000},
     PAGES = {xviii+434},
      ISBN = {0-19-850245-1},
   MRCLASS = {49-02 (49J45 49K10 49Qxx)},
  MRNUMBER = {1857292},
MRREVIEWER = {J. E. Brothers},
}

@article {BaNoSa14,
    AUTHOR = {Bartels, S\"{o}ren and Nochetto, Ricardo H. and Salgado, Abner J.},
     TITLE = {Discrete total variation flows without regularization},
   JOURNAL = {SIAM J. Numer. Anal.},
  FJOURNAL = {SIAM Journal on Numerical Analysis},
    VOLUME = {52},
      YEAR = {2014},
    NUMBER = {1},
     PAGES = {363--385},
      ISSN = {0036-1429},
   MRCLASS = {65M60 (35K59 35K61 65K15)},
  MRNUMBER = {3163248},
MRREVIEWER = {Weimin Han},
       DOI = {10.1137/120901544},
       URL = {https://doi.org/10.1137/120901544},
}

@incollection {LiLiTa00,
    AUTHOR = {Li, Ruo and Liu, Wenbin and Tang, Tao and Zhang, Pingwen},
     TITLE = {Moving mesh finite element methods based on harmonic maps},
 BOOKTITLE = {Scientific computing and applications ({K}ananaskis, {AB},
              2000)},
    SERIES = {Adv. Comput. Theory Pract.},
    VOLUME = {7},
     PAGES = {143--156},
 PUBLISHER = {Nova Sci. Publ., Huntington, NY},
      YEAR = {2001},
   MRCLASS = {65M50 (65M60)},
  MRNUMBER = {2032406},
}

@article {ChaPoc11,
    AUTHOR = {Chambolle, Antonin and Pock, Thomas},
     TITLE = {A first-order primal-dual algorithm for convex problems with
              applications to imaging},
   JOURNAL = {J. Math. Imaging Vision},
  FJOURNAL = {Journal of Mathematical Imaging and Vision},
    VOLUME = {40},
      YEAR = {2011},
    NUMBER = {1},
     PAGES = {120--145},
      ISSN = {0924-9907},
   MRCLASS = {94A08 (68U10 90C25)},
  MRNUMBER = {2782122},
       DOI = {10.1007/s10851-010-0251-1},
       URL = {https://doi.org/10.1007/s10851-010-0251-1},
}

@article {HinKun02,
    AUTHOR = {Hinterm\"{u}ller, M. and Ito, K. and Kunisch, K.},
     TITLE = {The primal-dual active set strategy as a semismooth {N}ewton
              method},
   JOURNAL = {SIAM J. Optim.},
  FJOURNAL = {SIAM Journal on Optimization},
    VOLUME = {13},
      YEAR = {2002},
    NUMBER = {3},
     PAGES = {865--888 (2003)},
      ISSN = {1052-6234},
   MRCLASS = {90C33 (65K10 90C53)},
  MRNUMBER = {1972219},
MRREVIEWER = {Hou Duo Qi},
       DOI = {10.1137/S1052623401383558},
       URL = {https://doi.org/10.1137/S1052623401383558},
}

@book {diPErn12-book,
    AUTHOR = {Di Pietro, Daniele Antonio and Ern, Alexandre},
     TITLE = {Mathematical aspects of discontinuous {G}alerkin methods},
    SERIES = {Math\'{e}matiques \& Applications (Berlin) [Mathematics \&
              Applications]},
    VOLUME = {69},
 PUBLISHER = {Springer, Heidelberg},
      YEAR = {2012},
     PAGES = {xviii+384},
      ISBN = {978-3-642-22979-4},
   MRCLASS = {65-02 (35A35 35F15 35J25 35Q35 65M60 65N30)},
  MRNUMBER = {2882148},
MRREVIEWER = {R\'{e}mi Vaillancourt},
       DOI = {10.1007/978-3-642-22980-0},
       URL = {https://doi.org/10.1007/978-3-642-22980-0},
}

@article {WaHaCh10,
    AUTHOR = {Wang, Fei and Han, Weimin and Cheng, Xiao-Liang},
     TITLE = {Discontinuous {G}alerkin methods for solving elliptic
              variational inequalities},
   JOURNAL = {SIAM J. Numer. Anal.},
  FJOURNAL = {SIAM Journal on Numerical Analysis},
    VOLUME = {48},
      YEAR = {2010},
    NUMBER = {2},
     PAGES = {708--733},
      ISSN = {0036-1429},
   MRCLASS = {65K15 (35J87 65N30)},
  MRNUMBER = {2670002},
MRREVIEWER = {Viorel Arnautu},
       DOI = {10.1137/09075891X},
       URL = {https://doi.org/10.1137/09075891X},
}

@article {Arno82,
    AUTHOR = {Arnold, Douglas N.},
     TITLE = {An interior penalty finite element method with discontinuous
              elements},
   JOURNAL = {SIAM J. Numer. Anal.},
  FJOURNAL = {SIAM Journal on Numerical Analysis},
    VOLUME = {19},
      YEAR = {1982},
    NUMBER = {4},
     PAGES = {742--760},
      ISSN = {0036-1429},
   MRCLASS = {65N30},
  MRNUMBER = {664882},
       DOI = {10.1137/0719052},
       URL = {https://doi.org/10.1137/0719052},
}

@article {ABCM01,
    AUTHOR = {Arnold, Douglas N. and Brezzi, Franco and Cockburn, Bernardo
              and Marini, L. Donatella},
     TITLE = {Unified analysis of discontinuous {G}alerkin methods for
              elliptic problems},
   JOURNAL = {SIAM J. Numer. Anal.},
  FJOURNAL = {SIAM Journal on Numerical Analysis},
    VOLUME = {39},
      YEAR = {2001/02},
    NUMBER = {5},
     PAGES = {1749--1779},
      ISSN = {0036-1429},
   MRCLASS = {65N30},
  MRNUMBER = {1885715},
       DOI = {10.1137/S0036142901384162},
       URL = {https://doi.org/10.1137/S0036142901384162},
}

@article {BufOrt09,
    AUTHOR = {Buffa, Annalisa and Ortner, Christoph},
     TITLE = {Compact embeddings of broken {S}obolev spaces and
              applications},
   JOURNAL = {IMA J. Numer. Anal.},
  FJOURNAL = {IMA Journal of Numerical Analysis},
    VOLUME = {29},
      YEAR = {2009},
    NUMBER = {4},
     PAGES = {827--855},
      ISSN = {0272-4979},
   MRCLASS = {65J05 (65N30)},
  MRNUMBER = {2557047},
       DOI = {10.1093/imanum/drn038},
       URL = {https://doi.org/10.1093/imanum/drn038},
}

@article {CCPS00,
    AUTHOR = {Castillo, Paul and Cockburn, Bernardo and Perugia, Ilaria and
              Sch\"{o}tzau, Dominik},
     TITLE = {An a priori error analysis of the local discontinuous
              {G}alerkin method for elliptic problems},
   JOURNAL = {SIAM J. Numer. Anal.},
  FJOURNAL = {SIAM Journal on Numerical Analysis},
    VOLUME = {38},
      YEAR = {2000},
    NUMBER = {5},
     PAGES = {1676--1706},
      ISSN = {0036-1429},
   MRCLASS = {65N15 (65N30)},
  MRNUMBER = {1813251},
       DOI = {10.1137/S0036142900371003},
       URL = {https://doi.org/10.1137/S0036142900371003},
}

@incollection {CCCNP10,
    AUTHOR = {Chambolle, Antonin and Caselles, Vicent and Cremers, Daniel
              and Novaga, Matteo and Pock, Thomas},
     TITLE = {An introduction to total variation for image analysis},
 BOOKTITLE = {Theoretical foundations and numerical methods for sparse
              recovery},
    SERIES = {Radon Ser. Comput. Appl. Math.},
    VOLUME = {9},
     PAGES = {263--340},
 PUBLISHER = {Walter de Gruyter, Berlin},
      YEAR = {2010},
   MRCLASS = {65D18 (26B30 68U10)},
  MRNUMBER = {2731599},
       DOI = {10.1515/9783110226157.263},
       URL = {https://doi.org/10.1515/9783110226157.263},
}

\end{document}